\documentclass[10pt,reqno]{amsart}
\usepackage{amssymb}
\usepackage[all]{xy}

\oddsidemargin 15mm
\evensidemargin 15mm
\textwidth 130mm

\theoremstyle{plain}
\newtheorem{prop}{Proposition}
\newtheorem{theo}[prop]{Theorem}
\newtheorem{coro}[prop]{Corollary}
\newtheorem{lemm}[prop]{Lemma}
\theoremstyle{remark}

\theoremstyle{definition}

\numberwithin{equation}{section}

\newcommand{\A}{{\mathbb A}}

\newcommand{\PP}{{\mathbb P}}

\newcommand{\G}{{\mathbb G}}
\newcommand{\N}{{\mathbb N}}

\newcommand{\Z}{{\mathbb Z}}

\newcommand{\eqto}{\stackrel{\lower1.5pt\hbox{$\scriptstyle\sim\,$}}\to}

\DeclareMathOperator{\Pic}{Pic}
\DeclareMathOperator{\Spec}{Spec}

\DeclareMathOperator{\Hom}{Hom}

\DeclareMathOperator{\Br}{Br}

\begin{document}
\title[Stable rationality]{Stable rationality and conic bundles}
\author{Brendan Hassett}
\address{
  Department of Mathematics, MS 136,
  Rice University,
  Houston, TX 77005, USA
}
\email{hassett@rice.edu}
\author{Andrew Kresch}
\address{
  Institut f\"ur Mathematik,
  Universit\"at Z\"urich,
  Winterthurerstrasse 190,
  CH-8057 Z\"urich, Switzerland
}
\email{andrew.kresch@math.uzh.ch}
\author{Yuri Tschinkel}
\address{
  Courant Institute,
  251 Mercer Street,
  New York, NY 10012, USA
and
  Simons Foundation,
  160 Fifth Avenue,
  New York, NY 10010, USA
}
\email{tschinkel@cims.nyu.edu}

\date{March 28, 2015}

\begin{abstract}
We study stable rationality properties of conic bundles over
rational surfaces.
\end{abstract}

\maketitle

\section{Introduction}
\label{sec.intro}
Let $k$ be an algebraically closed field, $X$ a smooth
projective variety over $k$.
We say $X$ is \emph{stably rational} if $X\times \PP^n$ is rational for
some $n\in \N$.
A breakthrough in the problem of addressing the stable rationality of
varieties was achieved by Voisin \cite{voisin}, who gave a remarkably
powerful sufficient criterion for the very general member of a
family of varieties to fail to be stably rational.
Working over the complex numbers,
she shows that the double cover of $\PP^3$ branched along
a very general quartic surface is not stably rational.
The degeneration aspects of Voisin's method were generalized
and simplified by Colliot-Th\'el\`ene and Pirutka \cite{colliotthelenepirutka}. 
These techniques have been applied to other families of double covers by Beauville
\cite{beauvillefourfive,beauvillesextic}; using 
a mixed characteristic version, introduced in \cite{colliotthelenepirutka}, 
and an idea of Koll\'ar \cite{kollar}, Totaro proved that many hypersurfaces in $\PP^n$ 
are not stably rational \cite{totaro}.

The problem of rationality of conic bundles has a long history; see, e.g.,
\cite{beauvilleprym} \cite[Conjecture I]{iskovskikh} \cite{sarkisovbir}.
See also \cite[Main Thm.]{shokurov} for rationality criteria.

An obvious necessary condition for (stable) rationality is
unirationality.
We recall, $X$ is unirational if there exists a dominant rational map from
a projective space to $X$.
Some cases of conic bundles, known to be unirational, are identified in
\cite{mella}, e.g., those with smooth branch locus in $\PP^2$
of degree $\le 8$.

In this paper we address the question of stable rationality of
conic bundles over rational surfaces.
Our main result is:

\begin{theo}
\label{thm.main}
Let $S$ be a rational smooth projective surface over an
uncountable algebraically closed field $k$ of characteristic different from $2$.
Let $\mathcal{L}$ be a linear system of effective divisors on $S$
whose general member is smooth and irreducible.
Let $\mathcal{M}$ an irreducible component of
the space of reduced nodal curves in $\mathcal{L}$
together with degree $2$ \'etale covering.
Assume that $\mathcal{M}$ contains
a cover, nontrivial over every irreducible component of a reducible curve
with smooth irreducible components.
Then the conic bundle over $S$ corresponding
to a very general point of $\mathcal{M}$ is not stably rational.
\end{theo}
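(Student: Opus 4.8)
The plan is to deduce the result from the degeneration (specialization) technique of Voisin and Colliot-Th\'el\`ene--Pirutka. Recall that this method reduces the failure of stable rationality for a very general member of a family to the existence of a \emph{single} special member $X_0$ satisfying two conditions: first, $X_0$ admits a resolution of singularities $\rho\colon \tilde X_0\to X_0$ that is \emph{universally $\mathrm{CH}_0$-trivial} (after base change to any field, $\rho_*$ is an isomorphism on $\mathrm{CH}_0$); and second, $\tilde X_0$ carries a nontrivial stably birational invariant, for which the natural candidate here is the unramified cohomology group $H^2_{\mathrm{nr}}(k(\tilde X_0),\Z/2)$ (equivalently the unramified Brauer group $\Br_{\mathrm{nr}}(k(\tilde X_0))[2]$, using that $k$ is algebraically closed). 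Granting such an $X_0$ among the conic bundles parametrized by $\mathcal M$, the specialization theorem forces the very general conic bundle in $\mathcal M$ to be non--stably rational.

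First I would produce the special member. By hypothesis $\mathcal M$ contains an \'etale double cover $\tilde D\to D$ that is nontrivial over every component of a reduced curve $D=\bigcup_i D_i$ with smooth irreducible components meeting in nodes. This datum determines a conic bundle $\pi_0\colon X_0\to S$ whose discriminant is $D$ and whose associated double cover along $D$ is $\tilde D\to D$; this $X_0$ is the fibre of the tautological family over the chosen point of $\mathcal M$. Its only singularities lie over the nodes of $D$, where the total space acquires ordinary double points of a standard local form dictated by two smooth branches of the discriminant crossing transversally.

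The first technical step is the resolution. Working with $\chara k\ne 2$, I would resolve $X_0$ by blow-ups supported over the nodes of $D$ and then analyse the fibres of $\rho$ pointwise: away from the nodes $\pi_0$ is already a standard conic bundle, while over each node the exceptional locus is a configuration of rational curves. Checking that every fibre of $\rho$ is either a point or a tree of rational curves, hence universally $\mathrm{CH}_0$-trivial, shows that $\rho$ itself is universally $\mathrm{CH}_0$-trivial. This is precisely where the nodal (as opposed to more degenerate) hypothesis on $D$ and the restriction $\chara k\ne 2$ are used.

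The heart of the argument, and the step I expect to present the main difficulty, is to exhibit a nontrivial class in $H^2_{\mathrm{nr}}(k(\tilde X_0),\Z/2)$. For this I would run the Artin--Mumford computation of the Brauer group of a conic bundle over a rational surface via the Bloch--Ogus/Gersten complex. Although the quaternion class $\alpha\in H^2(k(S),\Z/2)$ defining the conic bundle pulls back to zero on $X_0$ (the conic acquires a rational point over its own function field), its ramification data---encoded by the double cover $\tilde D\to D$---produces a genuinely unramified class on $\tilde X_0$ through the coboundary maps of the complex. The cover restricted to each component $D_i$ gives a nonzero element of $H^1(k(D_i),\Z/2)$, and the nodal crossing pattern at the points $D_i\cap D_j$ supplies exactly the matching of residues needed for these local data to glue into a class unramified along every divisor of $\tilde X_0$. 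The subtle points are verifying this gluing (the reciprocity at the nodes) and confirming that the resulting class is nonzero in $H^2_{\mathrm{nr}}$---that is, not a pullback from $S$---which is where the hypothesis of nontriviality over \emph{every} component, together with the rationality of $S$, is essential. With this nonvanishing stably birational invariant established on $\tilde X_0$, the Colliot-Th\'el\`ene--Pirutka specialization theorem yields that the conic bundle over a very general point of $\mathcal M$ is not stably rational.
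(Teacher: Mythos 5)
Your overall strategy coincides with the paper's: specialize to a conic bundle whose discriminant is the reducible nodal curve $D=\bigcup_i D_i$ with a double cover nontrivial on each component, use Artin--Mumford \cite[Prop.~3]{AM} (or \cite{CTO}) to get nontrivial $2$-torsion in the unramified Brauer group of a resolution, check that blowing up the ordinary double points does not affect the Chow decomposition of the diagonal, and invoke the Voisin/Colliot-Th\'el\`ene--Pirutka specialization theorem. Those steps are all present in the paper (Proposition \ref{prop.desing}, Theorem \ref{thm.voisin}, Corollary \ref{cor.voisin}) and your account of them is essentially correct.

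However, there is a genuine gap at the point where you write that $X_0$ ``is the fibre of the tautological family over the chosen point of $\mathcal M$.'' No such tautological family exists a priori: a point of $\mathcal M$ is a pair (nodal curve, \'etale double cover), and by \cite[Thm.~1]{AM} this only determines a conic bundle up to \emph{birational} isomorphism. To apply the specialization theorem you need an honest flat projective morphism $\mathcal C\to B$ with integral fibers over a neighborhood $B$ of the special point $b_0\in\mathcal M$, whose fiber over $b_0$ is a conic bundle with exactly ordinary double points over the nodes of $D$ and whose general fiber realizes the conic bundle attached to the general point of $\mathcal M$. Producing this family is the bulk of the paper's work (Section \ref{sec.constr} and the first part of the proof of the main theorem): one encodes the Brauer class as a sheaf of Azumaya algebras on the root stack $\sqrt{(S,D)}$ (Propositions \ref{prop.stabilizeraction}--\ref{cor.azumayaX}), passes to a $\mu_2$-gerbe and a rank $2$ bundle $E$, performs an elementary transform along an auxiliary curve to kill the deformation obstruction (Proposition \ref{prop.elementarytransform}), extends the resulting bundle over an \'etale neighborhood of $b_0$ by deformation theory and Grothendieck existence, and finally descends the associated conic fibration from the root stack to $B\times S$ (Proposition \ref{prop.conicbundle}). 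Without this step --- or some substitute for it --- the degeneration argument cannot be run, so your proposal as written does not yet prove the theorem. (A second, minor point: if you want the statement in positive characteristic as the theorem allows, you also need a version of the specialization theorem valid there, which the paper supplies in Theorem \ref{thm.voisin} using $\Z[1/p]$ coefficients and Rydh's Chow functors.)
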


By \cite[Thm.~1]{AM}, for each point $\{D'\rightarrow D\}\in \mathcal{M}$
there is a conic bundle over $S$ with discriminant curve $D$; this is 
defined up to birational isomorphism.
We remark that covers acquiring ramification above the nodes
are excluded from $\mathcal{M}$.

For example, Theorem \ref{thm.main} is applicable to the complete linear system
of degree $d$ curves in $\PP^2$ for $d\ge 6$.
A similar analysis is applicable, e.g., to Hirzebruch surfaces.
(See \cite{Beau,HKT}
for analysis of the monodromy of two-torsion points of Jacobians of curves in
rational ruled surfaces.) Our method is not applicable when the discriminant is a plane quintic
or a tri-section of a rational ruled surface; reducible curves 
of these types necessarily have rational components.

\medskip
\noindent
\textbf{Acknowlegdments:}
The first author was supported by NSF grants 1148609 and 1401764.
The second author was supported by the Swiss National Science Foundation.
The third author was supported by NSF grant 1160859.
We are grateful to I.~Cheltsov, J.-L.~Colliot-Th\'el\`ene, 
L.~Katzarkov, A.~Pirutka, and B.~Totaro for comments and suggestions.

\section{Background}
\label{sec.background}

\subsection{Brauer group}
\label{ss.brauergroup}
The Brauer group, for us, is the cohomological Brauer group
\[ \Br(S):=H^2(S,\G_m)_{\mathrm{tors}} \]
of a Noetherian scheme or Deligne-Mumford stack $S$.
(Here and elsewhere we work with \'etale cohomology groups.)
The role of points and residue fields of a scheme is played by
\emph{residual gerbes}
\begin{equation}
\label{eqn.residualgerbe}
\mathcal{G}_\xi\to \Spec(k(\xi))
\end{equation}
at points $\xi$ of a Noetherian Deligne-Mumford stack $S$
(cf.\ \cite[\S 11]{LMB}, \cite[App.\ B]{rydhjalg}).
In particular, if $S$ is integral, there is a residual gerbe $\mathcal{G}$
at the generic point of $S$.

\begin{prop}
\label{prop.Brbasic}
Let $S$ be a regular integral Noetherian Deligne-Mumford stack with residual gerbe
$\mathcal{G}$ at the generic point.
Then the restriction map $\Br(S)\to \Br(\mathcal{G})$ is injective.
Furthermore, if $\dim S=2$ then for any positive integer $n$, invertible in
the local rings of an \'etale atlas of $S$, the residue maps fit into
an exact sequence
\[
0\to \Br(S)[n]\to \Br(\mathcal{G})[n]\to
\bigoplus_{\xi\in S^{(1)}} H^1(\mathcal{G}_\xi,\Z/n\Z),
\]
where $S^{(1)}$ denotes the set of codimension $1$ points of $S$.
\end{prop}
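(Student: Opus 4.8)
The plan is to regard both assertions as the analogues, for a regular integral Deligne--Mumford stack, of classical theorems about the Brauer group of a regular integral scheme---Grothendieck's injectivity $\Br(X)\hookrightarrow\Br(k(X))$ for the first, and the residue exact sequence of a regular surface (Bloch--Ogus, Gersten) for the second---and to transport their proofs to $S$ via an \'etale atlas $U\to S$ by a regular integral scheme, with the residual gerbes $\mathcal{G}_\xi$ of \eqref{eqn.residualgerbe} playing the role of the residue fields $k(\xi)$. These require two different mechanisms: injectivity is proved with $\G_m$-coefficients and needs no assumption on torsion, whereas the residue sequence is proved with $\mu_n$-coefficients and uses the invertibility of $n$ in an essential way, through cohomological purity.

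For injectivity I would run Grothendieck's argument. Since $S$ is regular and integral, Weil and Cartier divisors agree and, on the small \'etale site of $S$, one has the divisor sequence
\[ 0\to \G_m\to j_*\G_m\to \bigoplus_{\xi\in S^{(1)}}(i_\xi)_*\Z\to 0, \]
where $j\colon\mathcal{G}\to S$ and $i_\xi\colon\mathcal{G}_\xi\to S$ are the canonical morphisms; this may be checked on $U$, where it is the classical divisor sequence, and descends to $S$. Feeding it into the Leray spectral sequence for $j$, and using Hilbert 90 in the form $R^1j_*\G_m=0$ (the generic fibre of the strict henselization of a regular local ring is a field), identifies the kernel of $H^2(S,\G_m)\to H^2(\mathcal{G},\G_m)$ with a divisorial contribution that vanishes exactly as in the scheme case, giving in particular $\Br(S)\hookrightarrow\Br(\mathcal{G})$. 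The only new point is that the target is the generic residual gerbe $\mathcal{G}$ rather than the generic point of $U$, which is precisely what is wanted: $\mathcal{G}$ retains the stacky structure at the generic point and its cohomology is again computed on the atlas.

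For the residue sequence ($\dim S=2$) I would use the coniveau spectral sequence for the \'etale cohomology of $S$ with coefficients in $\mu_n$; with this single Tate twist its rows are the Gersten complexes, whose relevant terms read
\[ H^2(\mathcal{G},\mu_n)\xrightarrow{\ \bigoplus_\xi\partial_\xi\ }\bigoplus_{\xi\in S^{(1)}}H^1(\mathcal{G}_\xi,\Z/n\Z)\to\cdots, \]
since $\mu_n^{\otimes 0}=\Z/n\Z$. Exactness of the Gersten complex together with $\dim S=2$ identifies the unramified classes $\ker(\bigoplus_\xi\partial_\xi)$ with the image of $H^2(S,\mu_n)$, and the Kummer sequence converts the groups $H^2(-,\mu_n)$ into $\Br(-)[n]$ modulo Picard contributions that cancel in the comparison; combined with the injectivity already proved, this yields the stated exact sequence. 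The main obstacle is precisely the Gersten resolution on the stack, i.e.\ cohomological purity with the residual gerbes as strata. I would reduce it to an \'etale-local statement: over the atlas $S$ is of the form $[V/G]$ with $V$ regular and $G$ finite, where Gabber's absolute purity---valid because $n$ is invertible in the local rings---identifies the local cohomology along a codimension-$p$ stratum with the cohomology of the corresponding residual gerbe, shifted by $2p$ in degree and Tate-twisted by $-p$. The delicate verification is that these local identifications are compatible with the gerbe structure, so that the automorphism groups contribute correctly to $H^1(\mathcal{G}_\xi,\Z/n\Z)$ and the assembled complex is genuinely the Gersten complex of $S$; it is here that invertibility of $n$ cannot be dispensed with.
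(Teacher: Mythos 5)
Your proposal is correct and follows essentially the same route as the paper, whose proof simply cites the classical scheme arguments (Grothendieck's injectivity and the residue sequence for regular surfaces) and notes that they carry over to Deligne--Mumford stacks with residual gerbes in place of residue fields. The one ingredient the paper explicitly singles out --- that the divisorial kernel term vanishes because $H^1(\mathcal{G}_\xi,\Z)=0$, which is proved via the Leray spectral sequence for $\mathcal{G}_\xi\to\Spec(k(\xi))$ rather than literally ``exactly as in the scheme case'' --- is the only place where your sketch slightly understates the (small) new verification required.
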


\begin{proof}
This is standard for schemes; see, e.g., \cite{GB}.
The proof is valid for Deligne-Mumford stacks given the
vanishing of $H^1(\mathcal{G}_\xi,\Z)$, which
in turn follows from the standard
vanishing of $H^1(\Spec(k(\xi)),\Z)$ and the Leray spectral sequence for
\eqref{eqn.residualgerbe}.
\end{proof}

\subsection{Gerbes}
\label{ss.gerbes}
Let $X$ be a Noetherian Deligne-Mumford stack.
By \emph{gerbe} on $X$
we mean gerbe banded by a commutative algebraic group, always
$\G_m$ or $\mu_n$ with $n$ invertible in the local rings of
an \'etale atlas of $X$.
So (c.f.\ \cite[\S 4.2]{milne}),
a gerbe is a morphism of algebraic stacks $G\to X$ which \'etale locally
admits sections, \'etale locally lets any pair of sections be identified,
and which is equipped with compatible identifications of the automorphism
groups of local sections with the invertible regular functions, respectively
$\mu_n$-valued functions.
In the $\mu_n$ case, $G$ is also a Deligne-Mumford stack.

A gerbe is classified uniquely up to isomorphism, compatible with the
identification of automorphism groups of local objects, by a cohomology class
in
\[ H^2(X,\mu_n)\qquad\text{or}\qquad H^2(X,\G_m) \]
in the respective cases.

\begin{prop}
\label{prop.azumayagerbe}
Let $X$ be a Noetherian Deligne-Mumford stack and $n$ a positive integer
invertible in the local rings of an
\'etale atlas of $X$.
Then the boundary map of nonabelian cohomology for the central exact sequence
of groups
\[ 1\to \mu_n\to SL_n\to PGL_n\to 1 \]
associates to
a sheaf of Azumaya algebras $\mathcal{A}$ on $X$ of index $n$
a $\mu_n$-gerbe
\[ G\to X \]
and a locally free sheaf $E$ of rank $n$
on $G$ such that $E^\vee\otimes E$ is identified with
the pullback of $\mathcal{A}$.
\end{prop}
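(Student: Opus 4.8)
The plan is to translate the Azumaya algebra into torsor language and recognize the boundary class as that of a gerbe of liftings. The hypothesis that $\mathcal{A}$ has index $n$ means it is étale-locally isomorphic to the matrix algebra $\End(\cO_X^{\oplus n})$, and the sheaf automorphisms of the matrix algebra are, by Skolem--Noether, exactly $PGL_n$. Hence the sheaf $\Isom(\End(\cO_X^{\oplus n}),\mathcal{A})$ of algebra isomorphisms is a $PGL_n$-torsor $P$ on $X$, whose class in $H^1(X,PGL_n)$ is the class of $\mathcal{A}$. Applying the connecting map of the central sequence $1\to\mu_n\to SL_n\to PGL_n\to 1$ produces a class in $H^2(X,\mu_n)$, and by the classification recalled in \S\ref{ss.gerbes} this class names a $\mu_n$-gerbe $G\to X$. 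I would realize $G$ concretely as the \emph{gerbe of liftings}: its objects over an étale $T\to X$ are pairs $(Q,\rho)$, where $Q$ is an $SL_n$-torsor on $T$ and $\rho$ identifies the induced $PGL_n$-torsor $Q\times^{SL_n}PGL_n$ with $P|_T$, with morphisms the isomorphisms of the $Q$'s compatible with $\rho$. That this stack is a gerbe banded by the kernel $\mu_n$, carrying precisely the boundary class, is essentially the definition of the connecting map in nonabelian cohomology.

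The sheaf $E$ I would extract from the standard representation. Pulling the $SL_n$-torsor $Q$ back to $G$, where it becomes tautological, and associating the standard $n$-dimensional representation yields a locally free sheaf $E$ of rank $n$ on $G$. Since an $SL_n$-torsor carries a canonical trivialization of the top exterior power, the objects of $G$ may equivalently be described as triples $(F,\theta,\delta)$ with $F$ locally free of rank $n$, an algebra isomorphism $\theta\colon\End(F)\eqto\mathcal{A}|_T$, and a trivialization $\delta\colon\det F\eqto\cO_T$; in this guise $E$ is simply the universal $F$. The central $\mu_n\subset SL_n$ acts on $E$ through its standard character, i.e.\ by scalars, so that $E$ genuinely lives on the stack $G$ and not on $X$. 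The very same scalars act trivially on $E^\vee\otimes E=\End(E)$, and the isomorphism $\theta$ then furnishes, over $G$, the required identification of $E^\vee\otimes E$ with the pullback of $\mathcal{A}$.

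The point demanding care is that the statement concerns a Deligne--Mumford stack rather than a scheme. I would dispose of this by working throughout on a fixed étale atlas $U\to X$ and its fiber products: over $U$ the torsor $P$ trivializes; a $PGL_n$-cocycle lifts to an $SL_n$-cochain étale-locally because $SL_n\to PGL_n$ is smooth and surjective; the failure of the cocycle identity is the promised $\mu_n$-valued $2$-cocycle; and descent along $U\to X$ glues $G$ and $E$ while matching the gerbe class with the boundary class. The genuinely substantive step, rather than one to be quoted, is the compatibility of the banding with the action on $E$ --- that the center acts faithfully on $E$ but trivially on $\End(E)$. This is exactly what makes $E$ a $G$-twisted sheaf whose endomorphism algebra recovers $\mathcal{A}$, and I expect verifying this compatibility to be the main obstacle.
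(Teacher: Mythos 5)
Your proposal is correct and follows essentially the same route as the paper: the paper takes $P=\Isom\bigl(\End(\mathcal{O}_X^n),\mathcal{A}\bigr)$ and realizes the gerbe as the quotient stack $[P/SL_n]$, which is canonically the gerbe of liftings you describe, with $E$ coming from the standard representation of $SL_n$ in both cases. The cocycle-on-an-atlas discussion and the banding compatibility you flag as the "main obstacle" are built into the quotient-stack formulation and need no separate verification.
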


\begin{proof}
The class $\gamma\in H^2(X,\mu_n)$, the image of the class of
$\mathcal{A}$ by the boundary map, is the class of the
$\mu_n$-gerbe
\[ [P/SL_n], \]
where $P\to X$ denote the $PGL_n$-torsor associated with $\mathcal{A}$
(cf.\ \cite[\S I.2]{GB}).
The representation given by the inclusion $SL_n\to GL_n$ determines a
vector bundle $E$ on $[P/SL_n]$ and an isomorphism of
$E^\vee\otimes E$ with the pullback of $\mathcal{A}$.
\end{proof}

\begin{prop}
\label{prop.azumayatrivial}
In the situation of Proposition \ref{prop.azumayagerbe}, suppose that
$\mathcal{A}$ is trivial in the Brauer group.
Then there exists a line bundle $T$ on the $\mu_n$-gerbe $G$ such that
the $\mu_n$-action on fibers is via the inclusion $\mu_n\to \G_m$.
\end{prop}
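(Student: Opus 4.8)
The plan is to reduce the statement to the triviality of an associated $\G_m$-gerbe and then pull back a tautological weight-one line bundle. Write $\pi\colon G\to X$ for the $\mu_n$-gerbe of Proposition \ref{prop.azumayagerbe}, with class $\gamma\in H^2(X,\mu_n)$. First I would identify the image $\bar\gamma\in H^2(X,\G_m)$ of $\gamma$ under $\mu_n\to\G_m$ with the Brauer class of $\mathcal{A}$. This is a compatibility of boundary maps: the two central extensions
\[ 1\to \mu_n\to SL_n\to PGL_n\to 1,\qquad 1\to \G_m\to GL_n\to PGL_n\to 1 \]
are related by the inclusions $\mu_n\to\G_m$ and $SL_n\to GL_n$, so the boundary maps $H^1(X,PGL_n)\to H^2(X,\mu_n)$ and $H^1(X,PGL_n)\to H^2(X,\G_m)$ are intertwined by $H^2(X,\mu_n)\to H^2(X,\G_m)$. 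Applying this to the class of the $PGL_n$-torsor $P$ associated with $\mathcal{A}$ gives $\bar\gamma=[\mathcal{A}]$ in $H^2(X,\G_m)$, and by hypothesis $[\mathcal{A}]=0$, so $\bar\gamma=0$.

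Next I would form the $\G_m$-gerbe $\mathcal{G}\to X$ obtained from $G$ by extension of the band along $\mu_n\to\G_m$ (the contracted product $G\wedge^{\mu_n}\G_m$), together with the canonical morphism $j\colon G\to\mathcal{G}$ compatible with the band inclusion. By the classification of gerbes recalled in Section \ref{ss.gerbes}, the class of $\mathcal{G}$ is exactly $\bar\gamma$; since $\bar\gamma=0$, the gerbe $\mathcal{G}$ is neutral, that is, isomorphic to $X\times B\G_m$. Fixing such a trivialization, let $\mathcal{T}$ be the pullback to $\mathcal{G}$ of the tautological character line bundle on $B\G_m$, a line bundle on which $\G_m$ acts through the identity character. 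Finally I would set $T:=j^*\mathcal{T}$: since $j$ carries the band $\mu_n$ of $G$ to the band $\G_m$ of $\mathcal{G}$ via the given inclusion, the $\mu_n$-action on the fibers of $T$ is the restriction to $\mu_n$ of the identity character of $\G_m$, namely the inclusion $\mu_n\to\G_m$, as required.

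I expect the main obstacle to be the bookkeeping around the band extension and the weights: one must check that $j$ is compatible with the bands in the precise sense that makes the $\mu_n$-action on $j^*\mathcal{T}$ come out to be the inclusion character itself rather than some power of it, and that $\mathcal{G}$ indeed carries the class $\bar\gamma$. Everything else — the compatibility of the two boundary maps, and the neutrality of a $\G_m$-gerbe with vanishing class — is standard.

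As an alternative that avoids the band extension and instead uses the sheaf $E$ of Proposition \ref{prop.azumayagerbe} directly, one may invoke the characterization that $[\mathcal{A}]=0$ gives $\mathcal{A}\cong\End_{\cO_X}(V)$ for a locally free sheaf $V$ of rank $n$ on $X$. Then on $G$ both $E$ and $\pi^*V$ are rank-$n$ locally free sheaves whose endomorphism algebra is $\pi^*\mathcal{A}$, so $T:=\mathcal{H}om_{\pi^*\mathcal{A}}(\pi^*V,E)$ is, by a local Morita-theoretic computation where $\pi^*\mathcal{A}$ is a matrix algebra, a line bundle satisfying $E\cong\pi^*V\otimes T$. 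Comparing the weights of the $\mu_n$-action (weight $0$ on the pullback $\pi^*V$, weight $1$ on $E$, since $\mu_n\subset SL_n$ acts on the standard representation by scalars) then shows that $T$ has weight $1$, i.e.\ $\mu_n$ acts via $\mu_n\to\G_m$; here the main point to verify is that $T$ is invertible with the stated evaluation isomorphism.
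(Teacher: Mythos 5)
Your proposal is correct, and your primary route is genuinely different from the paper's. The paper argues directly: Brauer-triviality of an Azumaya algebra of degree $n$ gives $\mathcal{A}\cong F^\vee\otimes F$ for a rank-$n$ locally free sheaf $F$ on $X$; it then identifies $P$ with the stack of projective trivializations $\PP(\mathcal{O}_X^n)\cong\PP(F)$ and takes $T$ to be the descent to $G=[P/SL_n]$ of the $\G_m$-torsor of linear maps $\mathcal{O}_X^n\to F$ inducing a given projective trivialization; the scalar subgroup $\mu_n\subset SL_n$ visibly acts on such maps by the inclusion character. Your second, alternative argument is essentially this same proof in Morita-theoretic dress: the sheaf of $\pi^*\mathcal{A}$-linear homomorphisms from $\pi^*V$ to $E$ is exactly that descent datum, and your weight count (weight $0$ on $\pi^*V$, weight $1$ on $E$) is the same computation. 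Your first route --- extending the band along $\mu_n\to\G_m$, identifying the class of the resulting $\G_m$-gerbe with $[\mathcal{A}]=0$ via the compatibility of the two boundary maps, trivializing that gerbe, and pulling back the tautological weight-one line bundle along $j$ --- is more cohomological and avoids choosing $F$ altogether. What it costs is two standard facts not spelled out in the paper: that band extension of gerbes corresponds to pushforward on $H^2$, and that the neutral banded $\G_m$-gerbe is $B_X\G_m$ with its weight-one tautological bundle. Note that the morphism $j\colon G\to\mathcal{G}$ you construct abstractly is the map $[P/SL_n]\to[P/GL_n]$ that the paper itself uses later in the proof of Lemma \ref{lem.Etilde}, so your first argument fits naturally into the paper's framework; both routes are valid.
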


\begin{proof}
By hypothesis, $\mathcal{A}\cong F^\vee\otimes F$
for some locally free sheaf $F$ of rank $n$ on $X$.
The $PGL_n$-torsor $P\to X$ associated with $\mathcal{A}$ may then be described
as the stack of
projective trivializations $\PP(\mathcal{O}_X^n)\cong \PP(F)$,
and the $\mu_n$-gerbe $G$ is $[P/SL_n]$.
The stack of compatible linear maps $\mathcal{O}_X^n\to F$ descends to a
line bundle on $G$ with the desired property.
\end{proof}

\subsection{Root stack}
\label{ss.rootstack}
Let $S$ be a regular integral Noetherian Deligne-Mumford stack,
$D\subset S$ an effective divisor, and $n$ a positive integer.
The corresponding \emph{root stack} will be denoted by
$\sqrt[n]{(S,D)}$ (\cite[\S 2]{cadman}, \cite[App.\ B]{AGV}).
This is a Deligne-Mumford stack when
$n$ is invertible in the local rings of an \'etale atlas of $S$.
Above $D$ is
the \emph{gerbe of the root stack} \cite[Def.\ 2.4.4]{cadman}, an
effective divisor of $\sqrt[n]{(S,D)}$ which is a $\mu_n$-gerbe over $D$.

\begin{prop}
\label{prop.ramificationtimesn}
Let $S$ be a regular integral Noetherian Deligne-Mumford stack of dimension $2$
and $D\subset S$ a regular closed substack of codimension $1$.
Then, for a positive integer $n$ invertible in the local rings of an
\'etale atlas of $S$ and $n$-torsion Brauer class $\alpha\in \Br(S\smallsetminus D)$
there exists a unique class
\[ \beta\in \Br\big(\sqrt{(S,D)}\,\big) \]
such that the restriction of $\beta$ to $S\smallsetminus D$ is equal to $\alpha$.
\end{prop}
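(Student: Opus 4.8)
The plan is to apply the residue sequence of Proposition \ref{prop.Brbasic} to the root stack itself. Write $\mathcal{X} = \sqrt[n]{(S,D)}$ and $U = S\smallsetminus D$. Since $D$ is regular and $n$ is invertible, $\mathcal{X}$ is again a regular integral Noetherian Deligne--Mumford stack of dimension $2$, and the open immersion $U\hookrightarrow\mathcal{X}$ identifies the generic point, hence the generic residual gerbe $\mathcal{G}$, of $\mathcal{X}$ with that of $U$. The uniqueness of $\beta$ is then immediate from Proposition \ref{prop.Brbasic}: the restriction $\Br(\mathcal{X})\to\Br(\mathcal{G})$ is injective and factors through $\Br(U)$, so $\Br(\mathcal{X})\to\Br(U)$ is injective and $\beta$ is determined by $\alpha$.

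For existence, I view $\alpha$ as a class in $\Br(\mathcal{G})[n]$ and use Proposition \ref{prop.Brbasic} for $\mathcal{X}$: the class lies in the subgroup $\Br(\mathcal{X})[n]$ precisely when its residue vanishes at every codimension $1$ point of $\mathcal{X}$. These points are the codimension $1$ points of $U$, where $\alpha$ is unramified because $\alpha\in\Br(U)$, together with one extra point, the generic point $\eta_D$ of the gerbe $\mathcal{D}\subset\mathcal{X}$ of the root stack, a $\mu_n$-gerbe over $D$. Thus everything reduces to the vanishing of the residue of $\alpha$ at $\eta_D$, a class in $H^1(\mathcal{G}_{\eta_D},\Z/n\Z)$.

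To compute this residue I use the local structure of the root stack: \'etale-locally on $S$ we may write $D = \{t = 0\}$, and then $\mathcal{X} = [\Spec A/\mu_n]$ with $A = B[s]/(s^n - t)$, the gerbe $\mathcal{D}$ being cut out by $s$. The structure morphism $\pi\colon\mathcal{X}\to S$ satisfies $\pi^* t = s^n$, so $D$ pulls back to $\mathcal{D}$ with multiplicity $n$. On the finite flat atlas $p\colon\Spec A\to\mathcal{X}$, the ring $A$ is a discrete valuation ring at $s = 0$ whose fraction field is totally ramified of degree $n$ over $k(S)$ along this valuation, with trivial residue field extension $k(D)$. By functoriality of residues under such a ramified extension, the residue of $p^*\alpha$ at $s = 0$ equals $n$ times the residue of $\alpha$ along $D$; as $\alpha$ is $n$-torsion this is zero. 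Reinterpreted on $\mathcal{X}$, the residue of $\alpha$ at $\eta_D$ is $n$ times the pullback along $\mathcal{G}_{\eta_D}\to\Spec k(D)$ of the residue of $\alpha$ along $D$, hence vanishes; consequently $\alpha$ extends to a class $\beta\in\Br(\mathcal{X})[n]$.

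The step I expect to be the main obstacle is the last one: making precise the functoriality of the residue in the stacky setting and, in particular, checking that the residue at $\eta_D$ vanishes in all of $H^1(\mathcal{G}_{\eta_D},\Z/n\Z)$ and not merely in the summand $H^1(k(D),\Z/n\Z)$ seen by the atlas. The residual gerbe $\mathcal{G}_{\eta_D}$ is a $\mu_n$-gerbe over $k(D)$, so $H^1(\mathcal{G}_{\eta_D},\Z/n\Z)$ carries an additional contribution $\Hom(\mu_n,\Z/n\Z)$ coming from the gerbe direction, and one must verify this component of the residue also vanishes. This is precisely where the identity $\pi^* t = s^n$ is used: expressing the residue as $n$ times a class pulled back from $D$ --- which has no gerbe component --- forces the whole residue, gerbe component included, to be annihilated by $n$ and therefore to vanish.
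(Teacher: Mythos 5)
Your argument is correct and is essentially the paper's own proof, which simply notes that the residue maps along $D$ and along the gerbe of the root stack differ by a factor of $n$ and then invokes the exact sequence of Proposition \ref{prop.Brbasic} applied to the root stack; your writeup just makes the local computation ($\pi^*t=s^n$, so the residue at the gerbe is $n$ times the residue along $D$, hence zero on $n$-torsion classes) and the uniqueness-via-injectivity step explicit. The extra care about the gerbe summand of $H^1(\mathcal{G}_{\eta_D},\Z/n\Z)$ is fine but not really needed, since the entire group has exponent $n$ and the residue is exhibited as $n$ times some class.
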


\begin{proof}
The residue maps of $D$ and the gerbe of the root stack differ by
a factor of $n$, so we have the result by the exact sequence of
Proposition \ref{prop.Brbasic} applied to $\sqrt{(S,D)}$.
\end{proof}

\subsection{Algebraic cycles}
\label{ss.algebraiccycles}
Most of the material here can be found in \cite{ACTP}.
We recall that the triviality of the Brauer group of a proper smooth variety
is connected with the triviality of the Chow group $CH_0$
of $0$-cycles modulo rational equivalence upon extension of the base field.

\begin{prop}
\label{prop.chowdiagonal}
Let $k$ be an algebraically closed field, and
$X$ a proper smooth algebraic variety over $k$.
We let $\Delta\subset X\times X$ denote the diagonal and
\[ \mathrm{pr}_1,\, \mathrm{pr}_2\colon X\times X\to X \]
the projections.
For a field $L$ containing $k$ we let $X_L$ denote
$X\times_{\Spec(k)}\Spec(L)$.
The following conditions are equivalent:
\begin{itemize}
\item[(i)] The degree map $CH_0(X_L)\to \Z$ is an isomorphism for every
field extension $L/k$.
\item[(ii)] The degree map $CH_0(X_{k(X)})\to \Z$ is an isomorphism.
\item[(iii)] The class of $\Delta$ in
$CH_{\dim X}(X\times X)$ is equal to a class of the form
$\delta+\mathrm{pr}_2^*\omega$, where
$\delta$ is a cycle supported on $D\times X$
for some closed $D\subset X$ of codimension $1$
and $\omega$ is a $0$-cycle on $X$
of degree $1$.
\end{itemize}
Furthermore, the conditions imply the vanishing of the $n$-torsion in
$\Br(X)$ for every integer $n$ invertible in $k$.
\end{prop}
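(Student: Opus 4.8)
The plan is to establish the cycle of implications (i)$\Rightarrow$(ii)$\Rightarrow$(iii)$\Rightarrow$(i) and then to deduce the Brauer vanishing from condition (ii) together with Proposition \ref{prop.Brbasic}. Throughout I write $F=k(X)$, and let $\eta\in X_F$ be the tautological $F$-point: under the identification of $X_F$ with the fibre of $\mathrm{pr}_1\colon X\times X\to X$ over the generic point of the first factor, $\eta$ is the point cut out by the diagonal, and the composite $\Spec F\xrightarrow{\eta}X_F\to X$ is the inclusion of the generic point. The implication (i)$\Rightarrow$(ii) is immediate, taking $L=F$.

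For (ii)$\Rightarrow$(iii) I would invoke the continuity of Chow groups. Restriction to the generic fibre of $\mathrm{pr}_1$ is a surjection $CH_{\dim X}(X\times X)\to CH_0(X_F)$, and the localization sequence identifies its kernel, after passing to the limit over dense open subsets $U=X\smallsetminus D$, with the subgroup of classes supported on $D\times X$ for closed $D\subsetneq X$. Since $\Delta$ restricts to $[\eta]$ and $\mathrm{pr}_2^*\omega$ restricts to $[\omega_F]$, both of degree $1$, condition (ii) forces $[\eta]=[\omega_F]$ in $CH_0(X_F)$; hence $\Delta-\mathrm{pr}_2^*\omega$ lies in the kernel, i.e.\ is supported on $D\times X$ for a suitable divisor $D$ (enlarging the support to a divisor if necessary), which is the asserted decomposition.

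For (iii)$\Rightarrow$(i), fix an extension $L/k$ and pull the decomposition back along the flat morphism $(X\times X)_L\to X\times X$ to get $\Delta_L=\delta_L+\mathrm{pr}_2^*\omega_L$ on $X_L\times_L X_L$, with $\delta_L$ supported on $D_L\times X_L$. Let $z\in CH_0(X_L)$ have degree $0$. Viewing the three cycles as correspondences, $z=(\Delta_L)_*z=(\delta_L)_*z+(\mathrm{pr}_2^*\omega_L)_*z$; the last term equals $(\deg z)\,\omega_L=0$, while the moving lemma on the smooth variety $X_L$ lets me represent $z$ by a cycle disjoint from $D_L$, so that $\mathrm{pr}_1^*z$ and $\delta_L$ have disjoint support and $(\delta_L)_*z=0$. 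Thus $z=0$, so the degree map is injective; it is surjective because $\omega_L$ has degree $1$.

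For the concluding Brauer statement I would work from (ii). By Proposition \ref{prop.Brbasic} the restriction $\Br(X)[n]\hookrightarrow\Br(F)[n]$ to the generic point is injective, so it suffices to kill every $\alpha\in\Br(X)[n]$ at the generic point. Pulling $\alpha$ back to $\alpha_F\in\Br(X_F)$, I would use the evaluation pairing $\Br(X_F)\times CH_0(X_F)\to\Br(F)$, which is well defined on rational equivalence classes by reciprocity on curves: evaluation at $[\eta]$ returns the image of $\alpha$ at the generic point, whereas evaluation at $[\omega_F]$ vanishes since $\omega$ is supported on $k$-points and $\Br(k)=0$. As $[\eta]=[\omega_F]$ by (ii), the generic restriction of $\alpha$ is $0$, and injectivity gives $\alpha=0$; alternatively one can let the correspondences furnished by (iii) act directly on \'etale cohomology. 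I expect the main obstacle to be the precise identification of the kernel in (ii)$\Rightarrow$(iii), namely the localization/decomposition-of-the-diagonal step, and, for the last part, the well-definedness of the evaluation pairing on rational equivalence classes.
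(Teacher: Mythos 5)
Your proof is correct, and for the equivalence of (i)--(iii) it follows essentially the paper's route: the paper declares (i) $\Rightarrow$ (ii) $\Rightarrow$ (iii) to be clear and proves (iii) $\Rightarrow$ (i) by exactly the correspondence identity $(\mathrm{pr}_2)_*([\Delta]\cdot\mathrm{pr}_1^*z)=z$ that you use (together with moving a degree-zero $0$-cycle off $D_L$); your localization/colimit argument for (ii) $\Rightarrow$ (iii) is the standard Bloch--Srinivas step that the paper leaves implicit. The genuine divergence is in the final Brauer statement. The paper quotes Merkurjev's theorem that conditions (i)--(iii) are equivalent to the triviality of all unramified elements of every cycle module over $k(X)$, and then specializes to \'etale cohomology with $\mu_n$-coefficients to conclude $\Br(X)[n]=0$. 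You instead argue directly: injectivity of $\Br(X)\to\Br(k(X))$ (Proposition \ref{prop.Brbasic}) plus the evaluation pairing $\Br(X_{k(X)})\times CH_0(X_{k(X)})\to\Br(k(X))$ applied to the equality $[\eta]=[\omega_{k(X)}]$ forced by (ii). This is the Brauer-group instance of Merkurjev's argument unwound by hand; it is more self-contained but shifts the burden onto the well-definedness of the evaluation pairing on rational equivalence classes (reciprocity for unramified Brauer classes on proper curves over $k(X)$, including corestrictions from closed points), which you correctly flag and which does hold since $X$ is smooth and proper. Both routes are sound: the paper's buys brevity via the cycle-module formalism, yours buys transparency at the cost of one classical reciprocity input.
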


\begin{proof}
A proof can be found in \cite[Prop.~1.4]{colliotthelenepirutka} but we
sketch the argument for convenience.
The implications (i) $\Rightarrow$ (ii) $\Rightarrow$ (iii) are clear,
and (iii) $\Rightarrow$ (i) follows from
$(\mathrm{pr}_2)_*([\Delta]\cdot \mathrm{pr}_1^*\eta)=\eta$ for
$\eta\in CH_0(X)$ and a similar identity after base change to $L$.

Merkurjev has shown \cite[Thm.\ 2.11]{merkurjev}
that conditions (i)--(iii) are equivalent to $k(X)$ having only
trivial unramified elements for every cycle module $M$
in the sense of Rost \cite{rost}.
Applying this to \'etale cohomology with values in $\mu_n$
(cf.\ \cite{colliotthelenepurity}) yields
$\Br(X)[n]=0$ for $n$ invertible in $k$.
\end{proof}

For a proper algebraic variety $X$ over an
algebraically closed field $k$, we say that there is a
\emph{Chow decomposition of the diagonal} for $X$
if condition (iii) holds for $X$.
There is no loss of generality in taking $\omega$ in (iii) to be $\{x\}$ where
$x$ is a $k$-point of $X$.
The condition, to admit a Chow decomposition of the diagonal, also makes sense
for Chow groups with coefficients in an arbitrary commutative ring.

\begin{prop}
\label{prop.srat}
Let $k$ be an algebraically closed field, and
$X$ a smooth projective algebraic variety over $k$.
If $X$ is stably rational, then $X$ admits a Chow decomposition of the diagonal.
\end{prop}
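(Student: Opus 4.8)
The plan is to reduce to condition (i) of Proposition \ref{prop.chowdiagonal} and then exploit the good birational behavior of $CH_0$. By that proposition it suffices to show that, for stably rational $X$, the degree map $CH_0(X_L)\to\Z$ is an isomorphism for every field extension $L/k$; I will call this property \emph{universal $CH_0$-triviality}. The strategy is to establish that universal $CH_0$-triviality is a stable birational invariant of smooth projective varieties and that projective space enjoys it, so that it passes to anything stably birational to a point, in particular to $X$.

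First I would dispose of two easy inputs. The projective bundle formula gives $CH_0(\PP^N_L)\cong\Z$ via the degree for every field $L$, so $\PP^N$ is universally $CH_0$-trivial. The same formula shows that only the top-dimensional summand contributes to zero-cycles, whence there is a degree-compatible isomorphism $CH_0((X\times\PP^m)_L)\cong CH_0(X_L)$; thus $X$ is universally $CH_0$-trivial if and only if $X\times\PP^m$ is.

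The crux is the birational invariance of $CH_0$ for smooth projective varieties, due to Colliot-Th\'el\`ene and Coray: if $X$ and $Y$ are smooth projective and birational over $L$, the birational correspondence induces a degree-preserving isomorphism $CH_0(X_L)\cong CH_0(Y_L)$. Applying this after every base change $L/k$ shows that universal $CH_0$-triviality is a birational invariant, and together with the product step it becomes a stable birational invariant. Granting this, if $X$ is stably rational then $X\times\PP^n$ is birational to some $\PP^N$, which is universally $CH_0$-trivial; hence so is $X\times\PP^n$, and hence so is $X$. This is exactly condition (i), and Proposition \ref{prop.chowdiagonal} completes the argument.

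I expect the birational invariance to be the main obstacle. Its proof relies on the moving lemma for zero-cycles on a smooth variety, combined with the fact that a rational map to a proper variety is a morphism away from a closed set of codimension at least two: one represents a zero-cycle by points lying off the indeterminacy locus, pushes forward along the graph, and then checks independence of the representative and compatibility of the two correspondences attached to the birational map and its inverse.
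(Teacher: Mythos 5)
Your argument is correct, but it takes a genuinely different route from the paper. The paper's proof is very short: it invokes Saltman's theorem that stably rational implies retract rational, and then cites \cite[Lem.~1.5]{colliotthelenepirutka}, which says that every smooth projective retract rational variety admits a Chow decomposition of the diagonal. You instead reduce to condition (i) of Proposition \ref{prop.chowdiagonal} (universal $CH_0$-triviality) and deduce it from the Colliot-Th\'el\`ene--Coray theorem that $CH_0$ of smooth proper varieties is a birational invariant, combined with the projective bundle formula. Both arguments are sound. The paper's route buys more: it establishes the conclusion under the strictly weaker hypothesis of retract rationality, which is the natural level of generality for this circle of ideas (and is what \cite{colliotthelenepirutka} actually proves). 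Your route is more self-contained and classical, at the cost of covering only the stably rational case and of having to supply the moving lemma for zero-cycles off a proper closed subset together with the fact that a rational map from a smooth variety to a proper one is defined away from codimension $2$; note that the moving step is unproblematic here since every extension $L$ of the algebraically closed field $k$ is infinite. One could also view your argument as essentially reproving the relevant special case of \cite[Lem.~1.5]{colliotthelenepirutka} by hand.
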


\begin{proof}
If $X$ is stably rational, then $X$ is
retract rational \cite{saltman}: there exist an integer $N$,
nonempty open $U\subset X$ and $V\subset \A^N$,
and morphisms $\varphi\colon U\to V$ and $\psi\colon V\to U$
such that $\psi\circ \varphi=\mathrm{id}_U$.
By \cite[Lem.\ 1.5]{colliotthelenepirutka}, every smooth projective
variety that is retract rational admits
a Chow decomposition of the diagonal.
\end{proof}

\begin{prop}
\label{prop.desing}
Let $X$ be a proper three-dimensional algebraic variety over an
algebraically closed field $k$ with only ordinary double point singularities,
and $\widetilde{X}\to X$ the standard resolution obtained by
blowing up the singular points of $X$.
Then there is a Chow decomposition of the diagonal for $\widetilde{X}$ if and only if
there is a Chow decomposition of the diagonal for $X$.
\end{prop}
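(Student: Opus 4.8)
The plan is to exhibit the standard resolution $f\colon \widetilde{X}\to X$ as a universally $CH_0$-trivial morphism and then to appeal to the formalism of Colliot-Th\'el\`ene and Pirutka. Recall that a proper morphism $f\colon \widetilde{X}\to X$ of proper $k$-varieties is \emph{universally $CH_0$-trivial} when $f_*\colon CH_0(\widetilde{X}_L)\to CH_0(X_L)$ is an isomorphism for every field $L$ containing $k$. By \cite[Prop.~1.8]{colliotthelenepirutka}, if such an $f$ is universally $CH_0$-trivial and $\widetilde{X}$ is smooth, then there is a Chow decomposition of the diagonal for $\widetilde{X}$ if and only if there is one for $X$. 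Since the standard resolution $\widetilde{X}$ of the ordinary double points is smooth and proper, the whole statement reduces, in both directions simultaneously, to checking that $f$ is universally $CH_0$-trivial.

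To verify this I would use the fibrewise criterion for universal $CH_0$-triviality from \cite[Prop.~1.8]{colliotthelenepirutka}: it suffices that for every point $x\in X$ the fiber $f^{-1}(x)$, viewed as a variety over the residue field $\kappa(x)$, be universally $CH_0$-trivial. The singular locus of $X$ is a finite set of ordinary double points, each a $k$-point, and $f$ is an isomorphism over the complement of this set; hence for every point $x$ that is not one of the nodes the fiber $f^{-1}(x)$ is a single reduced point and is trivially universally $CH_0$-trivial. Over a node $p$ the standard resolution blows up the reduced point $p$, so $f^{-1}(p)$ is the projectivized tangent cone at $p$. As an ordinary double point of a threefold is formally isomorphic to $\{xy=zw\}$, this projectivized tangent cone is a smooth quadric surface, and over the algebraically closed field $k$ we have $f^{-1}(p)\cong \PP^1\times \PP^1$. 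This surface is rational, so $CH_0\big((\PP^1\times \PP^1)_L\big)\to \Z$ is an isomorphism for every $L\supseteq k$; thus $f^{-1}(p)$ is universally $CH_0$-trivial as well.

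With the hypotheses of the fibrewise criterion verified at every point, $f$ is universally $CH_0$-trivial, and \cite[Prop.~1.8]{colliotthelenepirutka} delivers the asserted equivalence. I expect the only genuinely delicate point to be conceptual rather than computational: because $X$ is singular, the phrase ``Chow decomposition of the diagonal for $X$'' must be understood in the cycle-class sense of condition (iii) of Proposition \ref{prop.chowdiagonal} (which, as remarked there, makes sense for singular $X$), and not through conditions (i)--(ii), whose equivalence with (iii) we only have in the smooth case; the machinery of \cite{colliotthelenepirutka} is designed precisely to operate at this level of generality. Beyond this, the identification of each exceptional fiber with $\PP^1\times \PP^1$ and the verification that every non-nodal fiber is a single point are the only places that require care.
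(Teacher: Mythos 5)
Your argument is correct and reaches the same geometric heart of the matter as the paper --- the exceptional divisors are smooth quadrics $\PP^1\times\PP^1$, hence ($CH_0$-)trivial --- but it packages the cycle manipulations differently. The paper's proof is direct and self-contained: the forward implication is pure pushforward of cycles (no input about the fibers is needed at all), and the reverse implication is an explicit decomposition of $3$-cycles on $\widetilde{X}\times\PP^1\times\PP^1$ into a piece supported over a divisor of $\widetilde{X}$ plus $\mathrm{pr}_2^*$ of a $0$-cycle. You instead front-load everything into the statement that $f\colon\widetilde{X}\to X$ is universally $CH_0$-trivial, verified by the fibrewise criterion, and then invoke a transfer principle for decompositions of the diagonal along such morphisms. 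Your route is cleaner and more modular, and it generalizes immediately to other $CH_0$-trivial resolutions; the paper's is more elementary and avoids any reliance on statements about singular varieties beyond the cycle-level definition of the decomposition.

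The one point you should nail down is the transfer principle itself, which carries the real content of the reverse implication. Proposition 1.8 of \cite{colliotthelenepirutka} is (in the numbering I know) only the fibrewise criterion for universal $CH_0$-triviality of a morphism; it does not by itself assert that a universally $CH_0$-trivial birational morphism from a smooth variety transports a Chow decomposition of the diagonal from the singular target up to the smooth source. That implication is true, but since $X$ is singular you cannot pass through condition (i) of Proposition \ref{prop.chowdiagonal} (the correspondence action and the moving lemma both use smoothness), so you should either locate the precise statement in \cite{colliotthelenepirutka} or supply the short argument: restrict the decomposition on $X\times X$ to the generic fiber of $\mathrm{pr}_1$ to get $[\delta_X]=[\omega]$ in $CH_0(X_{k(X)})$, use injectivity of $f_*$ over the field $k(X)=k(\widetilde{X})$ to conclude $[\delta_{\widetilde{X}}]=[\widetilde{\omega}]$ in $CH_0(\widetilde{X}_{k(\widetilde{X})})$ for a lift $\widetilde{\omega}$ of $\omega$, and then take closures via the localization sequence to produce the decomposition on $\widetilde{X}\times\widetilde{X}$. (Note also that for the application in Corollary \ref{cor.voisin} the equivalence is needed with coefficients in $\Z[1/p]$; both your argument and the paper's adapt without change.)
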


\begin{proof}
Since pushforward of cycles respects rational equivalence,
the forwards implication is clear.
The reverse implication follows from the fact that
each exceptional divisor of the
resolution is isomorphic to $\PP^1\times \PP^1$, and up to rational equivalence
any $3$-cycle in $\widetilde{X}\times \PP^1\times \PP^1$ may be written as a
cycle supported over a divisor on $\widetilde{X}$ and the
pullback of a $0$-cycle from $\PP^1\times \PP^1$.
\end{proof}

\section{Chow decomposition of the diagonal in families}
\label{sec.families}
The proof of the main theorem uses
the following version of Voisin's result.

\begin{theo}
\label{thm.voisin}
Let $B$ be an algebraic variety over an uncountable
algebraically closed field $k$
and $\pi\colon \mathcal{C}\to B$ a flat projective morphism with
integral fibers.
Let $R$ be the ring $\Z$ if $k$ has characteristic $0$, and
$\Z[1/p]$ if $k$ has characteristic $p>0$.
If there exists a $k$-point $b_0\in B$ such that the fiber
$\mathcal{C}_{b_0}$ does not admit a Chow decomposition of the diagonal
with coefficients in $R$,
then for very general $b\in B$ the fiber $\mathcal{C}_b$ does not admit
a Chow decomposition of the diagonal with coefficients in $R$.
\end{theo}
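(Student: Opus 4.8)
The plan is to prove the contrapositive by showing that the locus
\[
B_{\mathrm{good}}=\{b\in B : \mathcal{C}_b \text{ admits a Chow decomposition of the diagonal with coefficients in } R\}
\]
is contained in a countable union of \emph{proper} closed subvarieties of $B$. Since $k$ is uncountable, the complement—the very general locus—is then nonempty and consists exactly of fibers admitting no such decomposition. The two ingredients are a countability statement for parameter spaces of cycles and the specialization property of Chow groups.

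First I would encode the datum of a decomposition. By condition (iii) of Proposition \ref{prop.chowdiagonal}, read with $R$-coefficients as permitted by the remark following it, a decomposition on $\mathcal{C}_b$ consists of a codimension-one closed $D_b\subset \mathcal{C}_b$, a degree-one zero-cycle $\omega_b$, a cycle $\delta_b$ supported on $D_b\times \mathcal{C}_b$, and a cycle on $(\mathcal{C}_b\times \mathcal{C}_b)\times \PP^1$ realizing the rational equivalence $[\Delta_b]\sim \delta_b+\mathrm{pr}_2^*\omega_b$. Working over the relative products $\mathcal{C}\times_B\mathcal{C}$ and $(\mathcal{C}\times_B\mathcal{C})\times \PP^1$, all of these data are parametrized by relative Chow (or Hilbert) schemes over $B$. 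In each fixed numerical class and degree these are of finite type over $B$, and the set of admissible numerical invariants is countable; hence the total parameter space is a countable union of finite-type $B$-schemes $P_i\to B$. Within each $P_i$ the locus on which the data genuinely constitute a decomposition—correct supports, correct fibers over $0,\infty\in\PP^1$, degree-one condition—is locally closed, so by Chevalley its image $T_i\subset B$ is constructible, and $B_{\mathrm{good}}=\bigcup_i T_i$.

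Next I would upgrade each constructible $T_i$ to a closed set by specialization of cycles. Along any morphism from a smooth curve germ into $B$, the classical specialization homomorphism on Chow groups (see Fulton, Chapter~20) carries the relative diagonal to the diagonal, a degree-one zero-cycle to a degree-one zero-cycle, and a cycle supported over a divisor again to one supported over a divisor; being defined integrally, it is functorial in the coefficient ring $R$. Consequently, if the fiber over the generic point of an irreducible subvariety $Z\subset B$ admits a decomposition, so does every fiber over $Z$. A constructible set contains a dense open subset of each irreducible component of its closure, so the generic point of every component $W$ of $\overline{T_i}$ lies in $T_i$; applying specialization along $W$ gives $\overline{T_i}\subset B_{\mathrm{good}}$. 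Thus $B_{\mathrm{good}}=\bigcup_i\overline{T_i}$ is a countable union of closed sets, and each is proper: since $b_0\notin B_{\mathrm{good}}=\bigcup_i\overline{T_i}$, we have $b_0\notin\overline{T_i}$, whence $\overline{T_i}\neq B$.

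The main obstacle I anticipate is the specialization step—specifically, verifying that the support and degree constraints of condition (iii) are preserved under the specialization homomorphism with $R$-coefficients, and that the relative cycle theory and the specialization map function well over a base $B$ that may be singular or nonreduced and in arbitrary characteristic. Here the flatness and projectivity of $\pi$, together with the integrality of the fibers, are precisely what guarantee good behavior of the relative Chow schemes and of specialization, and they allow the argument to proceed without any recourse to resolution of singularities, since the decomposition condition of Proposition \ref{prop.chowdiagonal}(iii) is meaningful for arbitrary proper varieties.
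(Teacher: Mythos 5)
Your proposal follows essentially the same route as the paper: encode candidate decompositions by countably many finite-type parameter schemes over $B$, show the decomposition locus is a countable union of proper closed subsets (constructibility of images plus specialization of cycles along curve germs), and conclude from the uncountability of $k$. The only point you assert that the paper must genuinely labor over is the closedness, inside the parameter space, of the locus where the parametrized data actually satisfy the cycle identity --- in positive characteristic this is handled by clearing $p$-denominators to reduce to an equality of effective relative cycles and then invoking Rydh's theory (universal openness of supports and a Noetherian induction) --- but this does not change the architecture of the argument.
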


\begin{proof}
The set of $k$-points $b\in B$ such that the fiber
$\mathcal{C}_b$ admits a Chow decomposition of the diagonal with coefficients
in $R$
is a countable union of proper closed subvarieties of $B$.
In characteristic $0$ this is \cite[Thm.\ 2.3, App.~B]{colliotthelenepirutka}.
An initial reduction step in loc.\ cit.\ lets us assume that
$\pi$ admits a section $s\colon B\to \mathcal{C}$.

An argument valid in characteristic $p>0$ may be formulated using
the formalism of Chow functors of \cite{rydhthesis}.
We recall, rational equivalence of cycles may be formulated in terms of
cycles on a product with $\PP^1$; cf.\ \cite[\S 1.6]{fulton}.
We fix a sufficiently ample invertible sheaf $\mathcal{O}(1)$ on $\mathcal{C}$,
with which we measure degrees on the fibers of $\pi$;
for degrees of cycles on the fibers of $\mathcal{C}\times_B\mathcal{C}\to B$ and
$\mathcal{C}\times_B\mathcal{C}\times\PP^1\to B$ we use
$\mathcal{O}(1,1)$, respectively $\mathcal{O}(1,1,1)$.
Let $r$ denote the relative dimension of $\mathcal{C}$ over $B$.
For a positive integer $d$ we let $W_d$ be a scheme, quasi-projective over
the projective bundle $\PP(\pi_*\mathcal{O}(d))$ of relative degree $d$ divisors
$\mathcal{D}\subset \mathcal{C}$,
equipped with effective relative cycles
$\delta_1$ and $\delta_2$ on $\mathcal{D}\times_B\mathcal{C}$ of (relative) dimension $r$
and degree at most $dp^d$
and $\varepsilon$
on $\mathcal{C}\times_B\mathcal{C}\times \PP^1$ of dimension $r+1$
and degree at most $dp^d$, such that:
\begin{itemize}
\item[(i)] For every $w\in W_d$ and $z\in \PP^1_{k(w)}$ the cycle
$\varepsilon|_{\{w\}}$ meets
$\mathcal{C}\times_B\mathcal{C}\times \{z\}$ properly.
\item[(ii)] Every $k$-point $b\in B$, divisor
$D\in |\mathcal{O}_{\mathcal{C}_b}(d)|$, and triple of cycles satisfying (i)
occur at some $k$-point of $W_d$.
\end{itemize}
(Using Hilbert-to-Chow morphisms, the normalization
of a finite union of projective Hilbert schemes satisfies the
conditions, ignoring the proper intersection condition, and by
semicontinuity of fiber dimension the
proper intersection condition determines an open subscheme.)

For effective relative cycles there are operations of addition and
intersection with a relative Cartier divisor
\cite[(IV.4.15), (IV.15.2)]{rydhthesis}.
The equality of cycles
\begin{equation}
\label{eqn.equalityofcycles}
p^d\Delta+\delta_1+(\mathcal{C}\times_B\mathcal{C}\times \{0\})\cdot \varepsilon
=p^d(\mathcal{C}\times_Bs(B))+
\delta_2+(\mathcal{C}\times_B\mathcal{C}\times\{\infty\})\cdot \varepsilon
\end{equation}
defines a Zariski closed subset
\[ V_d\subset W_d. \]
Indeed, given a scheme $T$ of finite type over $B$
and pair of effective relative cycles of $\mathcal{C}\times_BT$,
proper and equidimensional of dimension $r$ over $T$,
the set of
points of $T$ at which the cycles are equal is Zariski closed in $T$.
Indeed, the support of a relative cycle over $T$
is universally open over $T$
\cite[(IV.4.7)]{rydhthesis}, and thus the locus
where the supports are equal is closed in $T$.
Given two cycles over $T$ with the same support $Z$,
in order to show that the locus of equality is closed it suffices
by Noetherian induction
to show: if every nonempty open $U\subset T$ has a point $u\in U$ at which
the cycles are equal, then the cycles are equal at every point of $T$.
Let us write $Z=Z_1\cup\dots\cup Z_\ell$ (irreducible components).
We may replace $T$ by its reduced subscheme, then we restrict our attention
to open $U$ contained in the normal locus of $T$, such that for every $i$
both $Z_i\times_TU$ and
$(Z_i\cap(Z_1\cup\dots\cup\widehat{Z}_i\cup\dots\cup Z_\ell))\times_TU$
are flat over $U$.
Then equality of cycles at a point $u\in U$ implies equality at the
generic point of the component of $U$ containing $u$
(cf.\ \cite[(IV.10.4)]{rydhthesis}).
Now, as in \cite[(IV.6.1)]{rydhthesis}, equality of cycles at the generic point of
every irreducible component of $T$ implies equality of cycles at
all points of $T$.

We let $U_d$ denote the image of $V_d$ in $B$.
We have $U_d\subset U_{d+1}$, and the $k$-points of the union
\[
\mathcal{U}:=\bigcup_{d=1}^{\infty} U_d
\]
are precisely the $k$-points $b\in B$ for which
$\mathcal{C}_b$ admits a Chow decomposition of the diagonal
with coefficients in $\Z[1/p]$.

We are done if we can show that for any positive integer $d$,
pointed smooth curve $(A,a)$, and morphism $A\to B$ such that the
image of the generic point of $A$ lies in $U_d$,
the image $b\in B$ of $a$ lies in $\mathcal{U}$.
It suffices to do this under the further assumption that the restriction
$A\smallsetminus\{a\}\to B$ factors through $V_d$.
Then there are corresponding effective cycles with $\Z[1/p]$-coefficients,
flat over $A\smallsetminus \{a\}$, and by
\cite[(IV.10.5)]{rydhthesis} these specialize at $k$-points of
$A\smallsetminus \{a\}$ to the respective corresponding cycles
$\delta_1$, $\delta_2$, $\varepsilon$.
Equation \eqref{eqn.equalityofcycles} translates into an equality of
cycles over $A\smallsetminus \{a\}$.
By applying the cycle-level specialization maps \cite[Rmk.\ 2.3]{fulton} and
the commutativity of pairs of specialization maps up to rational equivalence
\cite[Thm.\ 2.4]{fulton}, we obtain $b\in \mathcal{U}$.
\end{proof}

In combination with Proposition \ref{prop.desing} we therefore have:

\begin{coro}
\label{cor.voisin}
Let $B$ be an algebraic variety over an uncountable
algebraically closed field $k$
and $\mathcal{C}\to B$ a flat projective morphism of
relative dimension $3$ with integral fibers,
generically smooth with $k$-point $b_0\in B$ such that the fiber
$\mathcal{C}_{b_0}$ has only ordinary double point singularities.
Let $\widetilde{\mathcal{C}}_{b_0}\to \mathcal{C}_{b_0}$
denote the standard resolution obtained by
blowing up the singularities.
If $\Br(\widetilde{\mathcal{C}}_{b_0})[n]\ne 0$ for
some integer $n$ invertible in $k$, then for very general $b\in B$ the
fiber $\mathcal{C}_b$ is not stably rational.
\end{coro}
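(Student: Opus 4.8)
The plan is to chain together the three earlier results—the Brauer obstruction packaged in Proposition \ref{prop.chowdiagonal}, the resolution comparison of Proposition \ref{prop.desing}, and Voisin's spreading theorem (Theorem \ref{thm.voisin})—while keeping careful track of the coefficient ring $R$. The logical skeleton is: nonvanishing of $\Br(\widetilde{\mathcal{C}}_{b_0})[n]$ obstructs a decomposition of the diagonal for $\widetilde{\mathcal{C}}_{b_0}$ with coefficients in $R$; this obstruction transfers to the singular fiber $\mathcal{C}_{b_0}$; Voisin's theorem then propagates the failure to a very general fiber; and finally failure of the decomposition rules out stable rationality via Proposition \ref{prop.srat}.

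First I would show that $\widetilde{\mathcal{C}}_{b_0}$ admits no Chow decomposition of the diagonal with coefficients in $R$. In characteristic $0$, where $R=\Z$, this is exactly the final assertion of Proposition \ref{prop.chowdiagonal} applied in the contrapositive. In characteristic $p$, where $R=\Z[1/p]$, I would reduce to the integral argument by clearing denominators: an $R$-coefficient decomposition yields, for some power $q=p^s$, an integral relation expressing $q[\Delta]$ as a cycle supported over a divisor plus $\mathrm{pr}_2^*$ of a $0$-cycle of degree $q$. Running the Merkurjev/Rost cycle-module argument of Proposition \ref{prop.chowdiagonal} on this weighted relation shows that $q$ annihilates the unramified $\mu_n$-cohomology of $k(\widetilde{\mathcal{C}}_{b_0})$, and hence $q\cdot\Br(\widetilde{\mathcal{C}}_{b_0})[n]=0$. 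Since $n$ is invertible in $k$, the integer $q=p^s$ is prime to $n$ and thus invertible modulo $n$, forcing $\Br(\widetilde{\mathcal{C}}_{b_0})[n]=0$—contradicting the hypothesis. Equivalently, one can note that for $n$ prime to $p$ the cycle module $H^*(-,\mu_n^{\otimes *})$ is already a $\Z[1/p]$-module, so the passage from $\Z$ to $R$ is free of charge at the level of $\mu_n$-coefficients.

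Next I would transport this failure to the singular fiber and then spread it out. The proof of Proposition \ref{prop.desing} is purely geometric—it uses only that each exceptional divisor of the standard resolution is $\PP^1\times\PP^1$ together with the resulting normal form for $3$-cycles—so it applies verbatim with coefficients in $R$; hence $\mathcal{C}_{b_0}$ likewise admits no $R$-coefficient decomposition of the diagonal. Now the family $\mathcal{C}\to B$ is flat and projective with integral fibers, and $b_0$ supplies a $k$-point whose fiber lacks such a decomposition, so Theorem \ref{thm.voisin} applies directly and gives that for very general $b\in B$ the fiber $\mathcal{C}_b$ admits no decomposition of the diagonal with coefficients in $R$.

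Finally I would convert this into the stable rationality statement. If $\mathcal{C}_b$ were stably rational, then by Proposition \ref{prop.srat} it would admit an integral Chow decomposition of the diagonal; extending scalars along the ring map $\Z\to R$ produces an $R$-coefficient decomposition, contradicting the previous paragraph. Hence $\mathcal{C}_b$ is not stably rational for very general $b$. The only genuinely delicate point in the whole argument is the coefficient bookkeeping in the first step: one must verify that the Brauer obstruction survives the passage from $\Z$ to $\Z[1/p]$, and the clean reason it does is precisely that the relevant torsion order $n$ is prime to $p$, so inverting $p$ is harmless for $\mu_n$-coefficients.
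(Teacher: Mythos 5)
Your argument is correct and is exactly the chain the paper intends: the corollary is stated with only the remark ``in combination with Proposition \ref{prop.desing} we therefore have,'' so the intended proof is precisely your concatenation of Proposition \ref{prop.chowdiagonal}, Proposition \ref{prop.desing}, Theorem \ref{thm.voisin}, and Proposition \ref{prop.srat}. Your explicit verification that the Brauer obstruction survives the passage from $\Z$ to $R=\Z[1/p]$ (because $n$ is prime to $p$, so the $\mu_n$-cycle module is already a $\Z[1/p]$-module) is a detail the paper leaves implicit, and you handle it correctly.
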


In particular, assuming the characteristic of $k$ to be different from $2$,
we may apply Corollary \ref{cor.voisin} in case
$\mathcal{C}_{b_0}$ is a conic bundle over a smooth projective
rational surface such that
\begin{itemize}
\item[(i)] The branch locus is a union $D_1\cup D_2$ where $D_1$ and $D_2$
are smooth curves meeting each other transversely.
\item[(ii)] The ramification data consists of degree $2$ \emph{unramified}
covers of $D_1$ and $D_2$.
\end{itemize}
Indeed, a conic bundle model of $\mathcal{C}_{b_0}$ with smooth total space
has branch locus the disjoint union of $D_1$ and $D_2$, and by \cite[Prop.~3]{AM}
(see also \cite{CTO})
there is nontrivial $2$-torsion in the unramified Brauer group of
$\mathcal{C}_{b_0}$.

\section{Construction}
\label{sec.constr}
\subsection{Construction I: Brauer class on root stacks}
\label{ss.constrI}
We let $S$ be a smooth rational projective surface over an
algebraically closed field $k$ of characteristic different from $2$.
Let $D\subset S$ be a divisor of the form $D_1\cup D_2$ where
$D_1$ and $D_2$ are smooth irreducible curves on $S$, each of
positive genus.
We assume that $D_1$ and $D_2$ meet transversely in some points
$p_1$, $\dots$, $p_r$.
Following Artin and Mumford \cite[Thm.~1]{AM}, letting $U:=S\smallsetminus D$,
a $2$-torsion element of the Brauer group
\[ \alpha\in \Br(U) \]
gives rise to double covers of $D_1$ and $D_2$ \'etale away from 
the intersection points; these determine $\alpha$ uniquely.
Moreover, \'etale double covers of $D_1$ and $D_2$ uniquely determine a
$2$-torsion element $\alpha \in \Br(U)$.

In the setting of $D=D_1\cup D_2$ on $S$ with given nontrivial \'etale double covers of
$D_1$ and $D_2$, the root stack
\[ X:=\sqrt{(S,D)} \]
has an ordinary double point singularity above each point $p_i$ and is
otherwise smooth.
Proposition \ref{prop.ramificationtimesn} applied to
$S\smallsetminus \{p_1,\dots,p_r\}$ shows that the class $\alpha$ is the
restriction of a unique element
\[ \beta\in \Br(X^{\mathrm{sm}}). \]

We will show that $\beta$ extends to a class in $\Br(X)$.
We start,
following Lieblich \cite{lieblichcrelle}, by producing an extension of
$\beta$ to a smooth compactification of $X^{\mathrm{sm}}$.
The smooth compactification that we use is
the \emph{iterated root stack}
\[ X':=\sqrt{(S,\{D_1,D_2\})} \]
introduced in \cite[Def.\ 2.2.4]{cadman}, which differs from the root stack $X$
in that the stabilizer group above each point $p_i$ is $\mu_2\times\mu_2$
rather than $\mu_2$.
The stack $X'$ admits a morphism
\[ \tau\colon X'\to X, \]
restricting to an isomorphism over $X^{\mathrm{sm}}$.
By Proposition \ref{prop.Brbasic}, there is a unique class
\[ \beta'\in \Br(X') \]
restricting to $\beta\in \Br(X^{\mathrm{sm}}).$

Given a $k$-point $p\in S$
we let $\mathcal{O}_{S,\bar p}$ denote the
(strict) Henselization of the local ring of $S$ at $p$.

\begin{lemm}
\label{lem.etalelocallytrivial}
For every $j$ the class $\alpha$ lies in the kernel of
\[ \Br(U)\to \Br(\Spec(\mathcal{O}_{S,\bar p_j})\times_SU). \]
\end{lemm}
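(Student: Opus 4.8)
The plan is to analyze the situation \'etale locally at a single point $p_j$, where the geometry becomes completely explicit and the Brauer class can be computed by hand. Since $\mathcal{O}_{S,\bar p_j}$ is a strictly Henselian regular local ring of dimension $2$, after choosing suitable local coordinates $x,y$ the divisors $D_1$ and $D_2$ are cut out by $x=0$ and $y=0$ respectively, and the intersection point $p_j$ corresponds to the closed point. First I would use the Artin--Mumford description recalled just above: the class $\alpha$ on $U=S\smallsetminus D$ is determined by the \'etale double covers of $D_1$ and $D_2$, and its residues along $D_1$ and $D_2$ are exactly the classes in $H^1$ of the respective residual gerbes (here just the residue fields, since we are working with schemes) classifying these covers. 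Restricting to $\Spec(\mathcal{O}_{S,\bar p_j})\times_SU$, the relevant local pieces of $D_1$ and $D_2$ are the punctured branches $\{x=0\}$ and $\{y=0\}$.

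The key point is that over the strictly Henselian base, these local branches of $D_1$ and $D_2$ are \emph{geometrically} the spectra of strictly Henselian (in fact one-dimensional regular local) rings, and the \'etale double covers giving $\alpha$ become \emph{trivial} after this restriction. Concretely, an \'etale double cover of the branch $D_i$ near $p_j$ corresponds to a class in $H^1(\text{branch},\Z/2\Z)$, but over the strict Henselization a connected \'etale cover of a local branch with strictly Henselian local ring must split, so this $H^1$ vanishes and both residues of $\alpha$ die. Then I would invoke Proposition \ref{prop.Brbasic}, applied to the regular two-dimensional local scheme $\Spec(\mathcal{O}_{S,\bar p_j})$ with its open complement $\Spec(\mathcal{O}_{S,\bar p_j})\times_SU$: the exact sequence there identifies the $2$-torsion in the Brauer group of the open complement, modulo the image of the Brauer group of the full (strictly Henselian) local scheme, with the direct sum of the residue groups $H^1$ along the two branches. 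Since those residue groups vanish for our class, $\alpha$ lies in the image of $\Br(\Spec(\mathcal{O}_{S,\bar p_j}))$.

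Finally, the Brauer group of the spectrum of a strictly Henselian local ring vanishes (it has no nontrivial \'etale cohomology in positive degree with the relevant coefficients), so the restriction of $\alpha$ to $\Spec(\mathcal{O}_{S,\bar p_j})\times_SU$ is in the image of a trivial group and hence is itself trivial; this is exactly the assertion that $\alpha$ lies in the kernel of the stated restriction map. The main obstacle I anticipate is being careful about the precise residue computation: one must check that the residue of $\alpha$ along each branch, as computed on the strict Henselization, is genuinely the pullback of the global residue datum (the \'etale double cover) and therefore vanishes because an \'etale double cover of a strictly Henselian local scheme is split. Once the vanishing of the local residues is secured, everything else is a direct application of Proposition \ref{prop.Brbasic} together with the triviality of the Brauer group of a strictly Henselian local ring.
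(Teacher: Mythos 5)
Your argument is correct and is exactly the content behind the paper's one-line proof: the residues of $\alpha$ along the two branches are the classes of the \'etale double covers, which split over the strict Henselization, so by Proposition \ref{prop.Brbasic} and the vanishing of the Brauer group of a strictly Henselian regular local ring the restriction of $\alpha$ is trivial. The only cosmetic point is that the exact sequence in Proposition \ref{prop.Brbasic} is stated for the Brauer group of the generic point, so one formally concludes triviality there first and then uses the injectivity statement of the same proposition to descend to $\Br(\Spec(\mathcal{O}_{S,\bar p_j})\times_S U)$.
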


\begin{proof}
This is immediate from Proposition \ref{prop.Brbasic} and the fact that
the degree $2$
covers of $D_1$ and $D_2$ are \'etale.
\end{proof}

\begin{prop}
\label{prop.azumaya1}
The element $\alpha$ is represented by a sheaf of Azumaya algebras
$\mathcal{A}'$ on $X'$ of index $2$.
\end{prop}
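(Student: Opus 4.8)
The plan is to realize $\beta'$ through the gerbe--twisted-sheaf dictionary of \S\ref{ss.gerbes}, inverting the construction of Proposition \ref{prop.azumayagerbe}. Write $g\colon G\to X'$ for the $\mu_2$-gerbe whose class in $H^2(X',\mu_2)$ is $\beta'$. It suffices to produce a locally free sheaf $E$ of rank $2$ on $G$ on which $\mu_2$ acts through the inclusion $\mu_2\hookrightarrow\G_m$ (``weight $1$''): then $\mathcal{E}nd(E)=E^\vee\otimes E$ carries the trivial $\mu_2$-action, hence descends to a sheaf of Azumaya algebras $\mathcal{A}'$ of index $2$ on $X'$, whose Brauer class is that of the gerbe, namely $\beta'$. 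This constructs the desired representative directly, bypassing any separate comparison of cohomological and Azumaya Brauer groups. Moreover the index of $\beta'$ divides $2$; since the double covers are nontrivial, $\alpha\ne 0$, and hence $\beta'\ne 0$ by Proposition \ref{prop.Brbasic}, so $\beta'$ has period, and therefore index, exactly $2$.

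It remains to construct $E$, which I would do first over the generic point and then extend, following the strategy of Lieblich \cite{lieblichcrelle}. The root stack and the morphism $\tau$ are isomorphisms over $U=S\smallsetminus D$, which contains the generic point, so the residual gerbe of $X'$ at its generic point is $\Spec(k(S))$ and, by Proposition \ref{prop.Brbasic}, $\beta'$ restricts there to $\alpha|_{k(S)}\in\Br(k(S))$. As $k(S)$ has transcendence degree $2$ over the algebraically closed field $k$, de Jong's period--index theorem for function fields of surfaces shows that the nonzero period-$2$ class $\alpha|_{k(S)}$ has index $2$, i.e.\ is a quaternion algebra. Equivalently, the restriction of $G$ to the generic point carries a locally free sheaf of rank $2$ and weight $1$.

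Next I would extend this over all of $G$. Choose any coherent sheaf on $G$ restricting to the generic twisted sheaf, pass to its weight-$1$ component under the $\mu_2$-action (still coherent and still correct generically), and replace the result $F$ by its reflexive hull $E:=F^{\vee\vee}$. Because $X'$ is a smooth Deligne--Mumford stack of dimension $2$ and $G\to X'$ is a gerbe, $G$ is itself regular of dimension $2$; hence a reflexive coherent sheaf on $G$ is locally free. Thus $E$ is a locally free weight-$1$ sheaf of rank $2$, and feeding it into the reduction of the first paragraph produces $\mathcal{A}'$.

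The main obstacle is the extension step: one must verify that the reflexive hull is genuinely locally free of rank $2$ everywhere on $G$, including over the points $p_i$, where $X'$ has stabilizer $\mu_2\times\mu_2$ and the local structure is least transparent. This rests on the coincidence of reflexive and locally free sheaves on the regular two-dimensional stack $G$, which I would check on an \'etale atlas by the usual depth (Auslander--Buchsbaum) argument and descend, together with the routine verification that passing to the reflexive hull preserves both the rank and the $\mu_2$-weight.
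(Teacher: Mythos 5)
Your argument is correct and shares the paper's two-step skeleton: establish that the class has index $2$ at the generic point of $X'$ (whose residual gerbe is indeed $\Spec(k(S))$, since the root-stack maps are isomorphisms over $U$), then extend a degree-$2$ representative across the regular two-dimensional stack. Where you differ is in how each step is discharged. For the generic step the paper invokes the elementary fact that over a $C_2$-field such as $k(S)$ every nontrivial $2$-torsion Brauer class is the class of a quaternion algebra (citing Sarkisov); you instead reach for de Jong's period--index theorem, which is correct but a far heavier tool than the period-$2$ case requires. For the extension step the paper simply cites that on a Noetherian regular stack of dimension $2$ a quaternion algebra at the generic point extends to a sheaf of Azumaya algebras; your twisted-sheaf argument (coherent extension, weight-$1$ eigensheaf, reflexive hull, reflexive $=$ locally free in regular dimension $2$ checked on an \'etale atlas) is precisely Lieblich's proof of that cited fact, and is consistent with the gerbe formalism the paper uses elsewhere (cf.\ Proposition \ref{prop.azumayagerbe}). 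One small point you elide: $\beta'$ lives in $\Br(X')[2]=H^2(X',\G_m)[2]$, so before you can speak of ``the $\mu_2$-gerbe with class $\beta'$'' you must lift it to $H^2(X',\mu_2)$ via the Kummer sequence; this lift exists but is not unique, and should be mentioned. With that caveat, your proposal is a correct, self-contained unpacking of the paper's two citations rather than a genuinely different route.
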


\begin{proof}
It is a general fact for $C_2$-fields of characteristic different from $2$,
e.g., the function field $k(S)$, that
a nontrivial $2$-torsion Brauer group element is
the class of a quaternion algebra; see \cite[\S 5.7]{sarkisovcb}.
Since $X'$ is Noetherian and regular of dimension $2$, such a quaternion algebra
is the restriction of a sheaf of Azumaya algebras.
\end{proof}

Let $f\colon Z\to W$ be a
finite-type morphism of
separated Noetherian Deligne-Mumford stacks.
Then there is a relative notion of corase moduli space,
introduced in \cite{AOVtwisted}, which determines a
stack with representable morphism to $W$, through which $f$ factors.
The relative moduli space is characterized by the standard sort of
universal property, after base change to an \'etale atlas of $W$.
In many cases $W$ already has this property; then we
will say, $f$ is a \emph{relative moduli space}.
Root stacks are one important class of examples.
The morphism $\tau$ above is also a relative moduli space.

Under a tameness hypothesis, satisfied by our assumption that the
characteristic of $k$ is different from $2$, a locally free coherent sheaf is
obtained as
pullback of a locally free coherent sheaf from a relative moduli space
if and only if
the relative stabilizer group actions on fibers at closed points
are trivial \cite[Thm.\ 10.3]{alper}.

\begin{prop}
\label{prop.stabilizeraction}
The action of the diagonal $\mu_2$ in the stabilizer $\mu_2\times\mu_2$
of the closed point
of $\Spec(\mathcal{O}_{S,\overline{p}_j})\times_S X'$
on the fiber of $\mathcal{A}'$ is trivial.
\end{prop}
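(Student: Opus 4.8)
The plan is to reduce to an explicit local computation on the standard smooth atlas of the root stack at $p_j$ and to exhibit generators of the algebra that are fixed by the diagonal $\mu_2$. Write $R=\mathcal O_{S,\overline p_j}$ and choose regular parameters $x,y\in R$ cutting out $D_1$ and $D_2$, so that $\Spec(\mathcal O_{S,\overline p_j})\times_S X'$ is the root stack $\sqrt{(\Spec R,\{D_1,D_2\})}$, presented as the quotient of $\widetilde X:=\Spec\big(R[u,v]/(u^2-x,v^2-y)\big)$ by $\mu_2\times\mu_2$, the two factors acting by $u\mapsto -u$ and $v\mapsto -v$. The diagonal $\mu_2$ then acts by $(u,v)\mapsto(-u,-v)$, and it is exactly the kernel of the stabilizer homomorphism $\mu_2\times\mu_2\to\mu_2$ induced by $\tau\colon X'\to X$, since on atlases $\tau$ is given by the diagonal-invariant function $uv$ (a square root of $xy$).

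First I would pin down the local shape of $\mathcal A'$. By Proposition \ref{prop.azumaya1} the class $\alpha$ is generically a quaternion algebra over $k(S)$; because its ramification is the pair of \emph{\'etale} covers of $D_1,D_2$, near $p_j$ it may be written as $(xy,g)$ with $g\in R^\times$ restricting to the two covers along $D_1,D_2$. Thus $\mathcal A'|_{\widetilde X}$ is the maximal (reflexive) order, on the regular surface $\widetilde X$, of the generic algebra with generators $i,j$ satisfying $i^2=xy$, $j^2=g$, $ij=-ji$. Here Lemma \ref{lem.etalelocallytrivial} enters: $\alpha$ restricts to zero on $\Spec R\times_S U$, and concretely, since $R$ is strictly Henselian with algebraically closed residue field and $\chara k\neq 2$, the unit $g$ is a square, say $g=h^2$ with $h\in R$.

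The key step is to rewrite this maximal order in diagonal-invariant generators. On $\widetilde X$ one has $xy=(uv)^2$ and $g=h^2$, so the integral elements
\[
 i':=i/(uv),\qquad j':=j/h
\]
satisfy $(i')^2=1$, $(j')^2=1$, $i'j'=-j'i'$, and the quaternion order $\mathcal O_{\widetilde X}\langle i',j'\rangle$ they generate is Azumaya (its defining scalars are units), hence equals the maximal order $\mathcal A'|_{\widetilde X}\cong M_2(\mathcal O_{\widetilde X})$. Now $i,j,h$ are pulled back from $\Spec R$ and so are $\mu_2\times\mu_2$-invariant, while the diagonal $\mu_2$ fixes $uv$; therefore it fixes both $i'$ and $j'$ and acts trivially on all of $\mathcal A'|_{\widetilde X}$, in particular on the fiber at the closed point. (Each non-diagonal factor sends $i'\mapsto-i'$, which is why $\mathcal A'$ descends to $X$ but not to $S$.)

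The main obstacle is conceptual rather than computational: triviality of $\alpha$ near $p_j$ only records the vanishing of a Brauer class, whereas the proposition concerns the equivariant structure of a specific sheaf of algebras. This gap is bridged by identifying $\mathcal A'|_{\widetilde X}$ with the maximal order and by the integrality of $i'=i/(uv)$ --- precisely the factor of $2$ absorbed by the root stack in Proposition \ref{prop.ramificationtimesn}, which promotes the square root $uv$ of $xy$ to an honest unit multiple of $i$. In the write-up the point to verify carefully is that $\mathcal O_{\widetilde X}\langle i',j'\rangle$ is already the full maximal order, so that no additional generators --- which could transform nontrivially under the diagonal --- are required.
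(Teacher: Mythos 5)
There is a genuine gap at the step ``the quaternion order $\mathcal{O}_{\widetilde X}\langle i',j'\rangle$ \dots is Azumaya \dots, hence equals the maximal order $\mathcal{A}'|_{\widetilde X}$.'' On the local root stack $\Spec(\mathcal{O}_{S,\overline{p}_j})\times_S X'$ a maximal (equivalently, Azumaya) order with the prescribed split generic fiber is \emph{not} unique as an equivariant subsheaf of the generic algebra: every such order is of the form $\End\big(\mathcal{O}\oplus\mathcal{O}(\tfrac{a}{2}D_1+\tfrac{b}{2}D_2)\big)$ with $(a,b)\in\{0,1\}^2$, and these four are pairwise non-isomorphic, distinguished precisely by the character of $\mu_2\times\mu_2$ occurring in the fiber at the closed point. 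Your order $\mathcal{O}_{\widetilde X}\langle i/(uv),\,j/h\rangle$ is the one with $(a,b)=(1,1)$, whose character is the product character and hence trivial on the diagonal; but nothing in your argument excludes that $\mathcal{A}'$ locally is one of the other three, e.g.\ $\End\big(\mathcal{O}\oplus\mathcal{O}(\tfrac12 D_1)\big)$, on whose fiber the diagonal acts nontrivially. Indeed no purely local argument can succeed: the only local input you use (Lemma \ref{lem.etalelocallytrivial}, concretely $g=h^2$) shows that the stabilizer representation has the form $\mathrm{triv}\oplus\chi$, but cannot pin down $\chi$ --- a unit of a strictly Henselian local ring is always a square, so the local data near $p_j$ is the same whether or not the double covers of $D_1$, $D_2$ are globally nontrivial, yet the conclusion fails if one of them is trivial.

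The missing ingredient, which is how the paper's proof proceeds, is global. One must show $\chi$ is nontrivial on each factor of $\mu_2\times\mu_2$: if $\chi$ were trivial on the factor corresponding to $D_{3-i}$, then $\mathcal{A}'$ would satisfy the descent criterion for $X'\to\sqrt{(S,D_i)}$ at $p_j$; since this criterion is a condition on the gerbe over $D_{3-i}$ that holds everywhere once it holds at one point ($D_{3-i}$ being connected), $\mathcal{A}'$ would descend near the generic point of $D_{3-i}$, contradicting that $\alpha$ does not extend across $D_{3-i}$ (nontriviality of its double cover). The unique character of $\mu_2\times\mu_2$ nontrivial on both factors is the product character, which kills the diagonal. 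Your local normal form correctly describes $\mathcal{A}'$ \emph{after} this is known, but as written it is the conclusion being assumed rather than proved.
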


\begin{proof}
By Lemma \ref{lem.etalelocallytrivial},
for each $j$ the pullback of $\beta'$ to
$\Spec(\mathcal{O}_{S,\overline{p}_j})\times_SX'$ vanishes.
So the $PGL_2$-representation coming from $\mathcal{A}'$
of the stabilizer $\mu_2\times \mu_2$ at the
closed point of $\Spec(\mathcal{O}_{S,\overline{p}_j})\times_SX'$ must
lift to a $GL_2$-representation, which is well-defined up to twist by a
character of $\mu_2\times \mu_2$.
Thus we may take it
to have the form of the direct sum of a trivial representation
and a character.
We claim that the character must be the composite
\[ \mu_2\times \mu_2\to \mu_2\to k^\times \]
of the multiplication and the nontrivial character of $\mu_2$.
Indeed, the criterion for a locally free coherent sheaf to be
a pullback via
\[ X'\to \sqrt{(S,D_i)}, \]
for $i\in \{1,2\}$, is nontrivial only on the gerbe of the root stack
over $D_{3-i}$, and since $D_{3-i}$ is connected the criterion is satisfied
everywhere if it is satisfied at one point.
So the claim follows from the fact that $\alpha$ does not extend to the
generic point of $D_{3-i}$.
\end{proof}

\begin{coro}
\label{cor.azumayaX}
The sheaf of Azumaya algebras $\mathcal{A}'$ is isomorphic to the
pullback of a
sheaf of Azumaya algebras $\mathcal{A}$ on $X$.
\end{coro}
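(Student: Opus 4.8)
The plan is to descend $\mathcal{A}'$ from $X'$ to $X$ by verifying the criterion of \cite[Thm.\ 10.3]{alper} recalled above, and then to descend the algebra structure and check that the result is again Azumaya. The first step is to identify the relative stabilizer group of $\tau\colon X'\to X$. Over $X^{\mathrm{sm}}$ the morphism $\tau$ is an isomorphism, so there the relative stabilizer is trivial. Above a point $p_j$, choosing local coordinates $x,y$ on $S$ with $D_1=\{x=0\}$ and $D_2=\{y=0\}$, the stack $X$ is locally the square root of the function $xy$, with stabilizer $\mu_2$ scaling that root, while $X'$ has stabilizer $\mu_2\times\mu_2$ scaling the roots of $x$ and of $y$ separately. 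Since $\tau$ sends a root of $xy$ to the product of the roots of $x$ and $y$, the induced homomorphism on stabilizers is the multiplication $\mu_2\times\mu_2\to\mu_2$, whose kernel is the diagonal $\mu_2$. Hence the relative stabilizer group above each $p_j$ is exactly the diagonal $\mu_2\subset\mu_2\times\mu_2$.

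Next I would invoke Proposition~\ref{prop.stabilizeraction}, which says precisely that this diagonal $\mu_2$ acts trivially on the fiber of $\mathcal{A}'$. Together with the triviality of the relative stabilizer over $X^{\mathrm{sm}}$, this shows that the relative stabilizer group action on the fibers of $\mathcal{A}'$ is trivial at every closed point. Since $\tau$ is a relative moduli space and the hypothesis $\chara(k)\ne 2$ supplies the required tameness, the criterion \cite[Thm.\ 10.3]{alper} then produces a locally free coherent sheaf $\mathcal{A}$ on $X$ together with an isomorphism $\tau^*\mathcal{A}\cong\mathcal{A}'$.

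It then remains to promote $\mathcal{A}$ to a sheaf of Azumaya algebras. Here I would use that \cite[Thm.\ 10.3]{alper} presents $\tau^*$ as an equivalence onto the full subcategory of locally free sheaves with trivial relative stabilizer action; as $\mathcal{A}$, $\mathcal{A}\otimes\mathcal{A}$, and $\mathcal{O}_X$ all lie in this subcategory, the multiplication and unit of $\mathcal{A}'$ descend to morphisms on $\mathcal{A}$, and the associativity and unit axioms follow by full faithfulness, so that $\tau^*\mathcal{A}\cong\mathcal{A}'$ as algebras. To conclude that $\mathcal{A}$ is Azumaya it suffices to check that its geometric fibers are central simple; because $\tau$ is surjective and $\tau^*\mathcal{A}\cong\mathcal{A}'$, each geometric fiber of $\mathcal{A}$ coincides with a geometric fiber of the Azumaya algebra $\mathcal{A}'$, hence is a matrix algebra, and $\mathcal{A}$ is locally free, so it is Azumaya.

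The main obstacle I anticipate lies not in descending the underlying sheaf but in these last steps. Because $X$ acquires ordinary double points above the $p_j$ while $X'$ is smooth, the morphism $\tau$ is a resolution and in particular is not flat, so $\tau^*$ is neither exact nor obviously conservative. This rules out the naive argument that Azumaya-ness descends simply because $\tau^*\mathcal{A}\cong\mathcal{A}'$ is Azumaya and $\tau^*$ reflects isomorphisms; one is forced instead to transport the algebra structure through the categorical equivalence of \cite[Thm.\ 10.3]{alper} and to verify the Azumaya property fiberwise, using only the surjectivity of $\tau$.
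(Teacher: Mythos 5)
Your proposal is correct and follows essentially the same route as the paper, which simply invokes the pullback criterion of \cite[Thm.\ 10.3]{alper} for the relative moduli space $\tau\colon X'\to X$ together with Proposition~\ref{prop.stabilizeraction}. Your additional care in identifying the relative stabilizer as the diagonal $\mu_2$ and in descending the multiplication and verifying the Azumaya condition fiberwise fills in details the paper leaves implicit, but it is the same argument.
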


\begin{proof}
This follows by the criterion for pullback of a locally free coherent sheaf,
applied to $\tau\colon X'\to X$.
\end{proof}

The class of $\mathcal{A}$ in $\Br(X)$ extends
$\beta\in \Br(X^{\mathrm{sm}})$ and will be denoted as well
by $\beta$.

\subsection{Construction II: Conic bundle}
\label{ss.constrII}
We have $G=[P/SL_2]$, the $\mu_2$-gerbe
of Proposition \ref{prop.azumayagerbe}
applied to the stack $X$ and the sheaf of Azumaya algebras $\mathcal{A}$,
which, we may recall, is classified by the cohomology class
\[ \gamma\in H^2(X,\mu_2) \]
coming from $\mathcal{A}$ via the boundary homomorphism of
nonabelian cohomology.

\begin{lemm}
\label{lem.Etilde}
Suppose that $\widetilde{E}$ is a locally free sheaf of rank $2$
on the $\mu_2$-gerbe $G$ with nontrivial generic stabilizer action.
Then the sheaf of Azumaya algebras $\widetilde{\mathcal{A}}$ on $X$,
defined uniquely up to
isomorphism by the requirement that the pullback of $\widetilde{\mathcal{A}}$
to $G$ is isomorphic to $\widetilde{E}^\vee\otimes \widetilde{E}$,
is Brauer equivalent to $\mathcal{A}$.
\end{lemm}

\begin{proof}
The representable morphism
\[ [P/SL_2]\to [P/GL_2] \]
from the $\mu_2$-gerbe to the $\G_m$-gerbe identifies the categories of
locally free coherent sheaves for which the generic stabilizer action is
the scalar action by the inclusion $\mu_2\to \G_m$, respectively the
identity morphism of $\G_m$; cf.\ \cite[Lemma 3.5]{lieblichstjohn}.
So there is a locally free sheaf $\widetilde{F}$ of rank $2$ on
$[P/GL_2]$ whose pullback to $G$ is isomorphic to $\widetilde{E}$.
The associated principal $GL_2$-bundle of $\widetilde{F}$ may be identified
with the principal $PGL_2$-bundle $\widetilde{P}$ associated with
$\widetilde{\mathcal{A}}$,
inducing
\[ [P/GL_2] \cong [\widetilde{P}/GL_2] \]
and hence an equality of the corresponding Brauer classes.
\end{proof}

\begin{prop}
\label{prop.conicbundle}
With $S$, $D$, $X=\sqrt{(S,D)}$, $\gamma\in H^2(X,\mu_2)$, and $G$ as above,
let $(B,b_0)$ be a smooth pointed
algebraic variety over $k$ and $\mathcal{D}\subset B\times S$ an
irreducible smooth divisor such that
$\mathcal{D}\cap (\{b_0\}\times S)=D$.
We let $\mathcal{X}$ denote the root stack $\sqrt{(B\times S,\mathcal{D})}$.
If
\[ \Gamma\in H^2(\mathcal{X},\mu_2) \]
is a class restricting to $\gamma$ under the inclusion of $X$ in
$\mathcal{X}$, and we let
\[ \mathcal{G}\to \mathcal{X} \]
denote a $\mu_2$-gerbe with class $\Gamma$, then for any
locally free sheaf $\widetilde{\mathcal{E}}$ of rank $2$ on $\mathcal{G}$ with
nontrivial generic stabilizer action, the
associated smooth conic fibration
\[ \widetilde{\mathcal{C}}\to \mathcal{X}, \]
characterized uniquely up to isomorphism by
\[ \mathcal{G}\times_{\mathcal{X}}
\widetilde{\mathcal{C}}\cong \PP(\widetilde{\mathcal{E}}), \]
has nontrivial stack structure at precisely two geometric points above every
geometric point of $\mathcal{D}$.
The blow-up
\[ B\ell_{\widetilde{\mathcal{C}}^{\mathrm{stack}}}\widetilde{\mathcal{C}}\to
\mathcal{X} \]
of the locus with nontrivial stack structure
is a flat family of genus $0$ prestable curves whose geometric fibers
are irreducible over points of $\mathcal{X}\smallsetminus \mathcal{D}$ and
chains of three irreducible components over points of
$\mathcal{D}$.
Collapsing the ``middle component'' of the three-component chains yields a
flat family of genus $0$ prestable curves which is isomorphic to the
base-change by $\mathcal{X}\to B\times S$ of a standard conic bundle
\[ \mathcal{C}\to B\times S. \]
\end{prop}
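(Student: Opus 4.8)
The plan is to reduce the entire statement to an explicit computation on an \'etale neighbourhood of $\mathcal{D}$ in $B\times S$, and then to check that the resulting local descriptions glue and descend. Throughout I use that $\mathcal{X}\to B\times S$ and $\mathcal{G}\to\mathcal{X}$ are relative moduli spaces in the sense recalled before Proposition \ref{prop.stabilizeraction}, so that pullback and descent of (families of) sheaves and conics are controlled by the stabilizer–action criterion \cite[Thm.\ 10.3]{alper}. I would first construct $\widetilde{\mathcal{C}}$ and verify that it is a smooth conic fibration. The $\mu_2$ banding of $\mathcal{G}$ acts on $\widetilde{\mathcal{E}}$ through the inclusion $\mu_2\to \G_m$ (this is the content of ``nontrivial generic stabilizer action''), hence acts trivially on $\PP(\widetilde{\mathcal{E}})$. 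Exactly as in Lemma \ref{lem.Etilde}, the passage from the $\mu_2$-gerbe to the associated $\G_m$-gerbe identifies $\PP(\widetilde{\mathcal{E}})$ with the pullback of a conic fibration $\widetilde{\mathcal{C}}\to \mathcal{X}$, which is thereby characterized uniquely up to isomorphism and is a smooth family of conics.

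Next I would pin down the local structure over $\mathcal{D}$. On an \'etale chart $\Spec R$ of $B\times S$ with $\mathcal{D}=\{t=0\}$ the root stack is $[\Spec R[s]/(s^2-t)\,/\,\mu_2]$, with nontrivial element acting by $s\mapsto -s$ and gerbe the stacky locus $\{s=0\}$. Restricting $\widetilde{\mathcal{C}}$ to a geometric point $\overline{x}\in\mathcal{D}$ yields $[\PP^1/\mu_2]$, where $\mu_2$ is the root-stack stabilizer acting via $\chi_1\chi_2^{-1}$ on $\PP(\widetilde{\mathcal{E}}|_{\overline{x}})$, the $\chi_i$ being the two characters of the rank-two stabilizer representation. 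The crux is that $\chi_1\neq \chi_2$, so that this action has \emph{exactly two} fixed points, namely the two eigenlines, and is free elsewhere. I would prove $\chi_1\neq \chi_2$ precisely as in Proposition \ref{prop.stabilizeraction}: were the two characters equal, the action on $\PP^1$ would be trivial, $\PP(\widetilde{\mathcal{E}})$ would descend to a genuine $\PP^1$-bundle across $\mathcal{D}$, and the associated conic bundle would be smooth over $\mathcal{D}$, contradicting the ramification of $\alpha$ (equivalently, the nontriviality of $\Gamma$ on the residual gerbe at the generic point of $\mathcal{D}$) recorded by the nontrivial \'etale cover. This yields the first assertion, and globally the two fixed points sweep out the codimension-two smooth substack $\widetilde{\mathcal{C}}^{\mathrm{stack}}$, the stacky double cover of $\mathcal{D}$ cut out by the ramification data.

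I would then analyze the blow-up by a toric local model. Near a fixed point the total space is $[\A^2_{u,v}/\mu_2]$ (times the smooth directions along $\mathcal{D}$) with $\mu_2$ acting by $\tfrac12(1,1)$, where $u$ is the root-stack normal coordinate and $v$ the affine fibre coordinate, on each of which the nontrivial element acts by $-1$; the conic fibration is the projection to $[\A^1_u/\mu_2]$. Blowing up the stacky locus $\{u=v=0\}$ and computing on the two standard charts shows that the pullback of the fibre divisor $\{u=0\}$ is, with multiplicity one, the sum of the strict transform of the fibre and the exceptional $\PP^1$; hence the blow-up is flat over $\mathcal{X}$, and over a point of $\mathcal{D}$ the fibre is the chain consisting of the strict transform of $[\PP^1/\mu_2]$ meeting each of the two exceptional curves, i.e.\ a chain of three rational components. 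Over $\mathcal{X}\smallsetminus \mathcal{D}$ the centre is disjoint from the (smooth, irreducible) fibres, which are unchanged. This gives the second assertion.

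Finally I would contract the middle component and descend. The middle component is the strict transform of the original conic fibre, an honest $\PP^1$ whose residual fibrewise normal data permit its collapse in the family; contracting it produces a flat family of genus-zero prestable curves with nodal-conic fibres (two rational curves meeting at a point) over $\mathcal{D}$ and smooth conic fibres elsewhere, the node arising where the two exceptional curves are brought together. The residual $\mu_2$-stabilizers along the exceptional curves act trivially on this contracted family, so by \cite[Thm.\ 10.3]{alper} it is pulled back along $\mathcal{X}\to B\times S$ from a flat family of conics $\mathcal{C}\to B\times S$, which by \cite[Thm.\ 1]{AM} is the standard conic bundle with discriminant $\mathcal{D}$ and prescribed \'etale cover. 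I expect the main obstacle to be this last step: constructing the contraction of the middle component as a morphism of Deligne–Mumford stacks yielding a flat family, and verifying that the contracted family genuinely descends to a conic bundle over $B\times S$ (not merely over $\mathcal{X}$) that matches the Artin–Mumford model, the correct recombination of the two exceptional components into the discriminant cover being exactly where the character bookkeeping of Proposition \ref{prop.stabilizeraction} and the triviality criterion must be applied most carefully.
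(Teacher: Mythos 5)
Your proposal is correct and follows essentially the same route as the paper: the two stacky points per fiber come from the splitting of the stabilizer representation into two distinct characters, forced by the non-extension of $\alpha$ across the components of $D$ (the content of Proposition \ref{prop.stabilizeraction} via Lemma \ref{lem.Etilde}) and propagated by connectedness of $\mathcal{D}$; the blow-up, three-component chains, contraction of the middle component, and descent via triviality of the residual stabilizer action all match the paper's argument. You simply make explicit the local toric computation that the paper leaves terse, and you rightly flag the contraction step, which the paper handles by the relative anticanonical Proj construction of \cite[\S 5.2]{kresch}.
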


\begin{proof}
Since $\mathcal{D}$ is connected, it suffices to check the structure of
the locus with stack structure above a single point.
By hypothesis, $\mathcal{G}\times_{\mathcal{X}}X$ may be identified with $G$.
By Lemma \ref{lem.Etilde} applied to $\widetilde{\mathcal{E}}|_G$,
we may apply the analysis of Proposition \ref{prop.stabilizeraction}
to the sheaf of Azumaya algebras associated with
$\widetilde{\mathcal{E}}|_G$
(cf.\ Lemma \ref{lem.Etilde}), pulled back to $X'$, to see that the
stack structure is as claimed.
The blow-up is smooth, so flatness follows from the fibers being of
constant dimension, and the geometric fibers are as claimed.
The collapsing of the middle components is a standard construction,
achievable for instance by formation of Proj of the sum of the direct images
of powers of the relative anticanonical sheaf; cf.\ \cite[\S 5.2]{kresch}.
Since $\mathcal{X}\to B\times S$ is flat (a general property of
root stacks), descent to $B\times S$
follows from the observation that the stabilizer action on singular fibers
becomes trivial after collapsing the middle components.
\end{proof}

\subsection{Construction III: Elementary transform}
\label{ss.constrIII}
We will obtain a locally free sheaf $\widetilde{\mathcal{E}}$ as in
Proposition \ref{prop.conicbundle} using deformation theory.
The strategy is to start with the locally free sheaf $E$ on $G$ of
Proposition \ref{prop.azumayagerbe} and apply a standard modification,
called elementary transform, to produce a
locally free sheaf $\widetilde{E}$ of rank $2$ on $G$, which satisfies a
deformation-theoretic condition to extend to a
locally free sheaf $\widetilde{\mathcal{E}}$.

The locally free sheaf $E$ of Proposition \ref{prop.azumayagerbe} has
the property that $E^\vee\otimes E$ descends to the
sheaf of Azumaya algebras $\mathcal{A}$ on $X$.
In this and similar situations we will also use the notation $E^\vee\otimes E$
for the corresponding locally free sheaf on $X$.

The deformation theory of
locally free coherent sheaves with given determinant is recalled, e.g.,
in \cite{ogrady}.
The space of obstructions for $E$ is the group
\[ \ker\big(H^2(X, E^\vee\otimes E)\to H^2(X,\mathcal{O}_X)\big), \]
kernel of the trace homomorphism.
We may work on $X'$, since $X$ is a relative coarse moduli space
of $X'$, hence $R^i\tau_*\mathcal{F}'=0$ for every
quasicoherent sheaf $\mathcal{F}'$ on $X'$ and $i>0$
(cf.\ \cite[\S 3]{AOVtame}).
There is a gerbe $G'$, fitting into a fiber diagram:
\[
\xymatrix{
G' \ar[r] \ar[d]_\sigma & X' \ar[d]^\tau \\
G \ar[r] & X
}
\]
Letting
\[ E':=\sigma^*E, \]
we have
\[ H^i(X,E^\vee\otimes E)\cong H^i(X',E'^\vee\otimes E'). \]

Since $X'$ is smooth, tame, with projective coarse moduli space,
there is Serre duality with the standard dualizing sheaf $K=K_{X'}$
(\cite[\S 2]{nironi}, \cite[App.\ B]{bruzzosala}).
Furthermore, since the rank $2$ of $E$ is invertible in $k$, the trace
homomorphism admits a canonical splitting.
Combining these facts, we identify the space of obstructions with the dual of
\[ H^0(X',(E'^\vee\otimes E')_0\otimes K):=
H^0(X',\ker(E'^\vee\otimes E'\otimes K\to K)). \]
The kernel, here and below, is denoted with subscript $0$.

Let $C$ be a smooth irreducible orbifold curve on $X$, contained
entirely in $X^{\mathrm{sm}}$, let $E|_C$ denote by abuse of notation
the restriction of $E$ to $G\times_XC$, and let
\begin{equation}
\label{eqn.REQ}
0\to R\to E|_C\to Q\to 0
\end{equation}
be an exact sequence of coherent sheaves on $G\times_XC$,
where $Q$ and $R$ are locally free of rank $1$.
Then we define $\widetilde{E}$ to be the kernel of the composite of
restriction to $G\times_XC$ and the map to $Q$ from \eqref{eqn.REQ}, so that
we have the exact sequence
\[ 0\to \widetilde{E}\to E\to Q\to 0. \]

\begin{prop}
\label{prop.elementarytransform}
There exist such an orbifold curve $C$ and such an exact sequence
\eqref{eqn.REQ} of locally free coherent sheaves on $G\times_XC$, such that
the space of obstructions for the elementary transform $\widetilde{E}$
vanishes.
\end{prop}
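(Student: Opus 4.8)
The plan is to make the obstruction space itself vanish. By the identification recalled above, applied now to $\widetilde E$, the obstruction space for $\widetilde{E}$ is the dual of $H^0\big(X',(\widetilde{E}'^\vee\otimes\widetilde{E}')_0\otimes K\big)$, where $\widetilde E'=\sigma^*\widetilde E$ and $K=K_{X'}$; since the elementary transform takes place in $X^{\mathrm{sm}}$, where $\tau$ is an isomorphism, this may equally be computed from $\widetilde E$ on $X$. So I would reduce to exhibiting one orbifold curve $C$ and one sequence \eqref{eqn.REQ} for which this $H^0$ vanishes. Throughout I work on $G\times_X C$, but since $R$ and $Q$ inherit the $\mu_2$-weight of $E|_C$, all the sheaves below have trivial weight, so their cohomology is computed on $C$, respectively on $X'$.

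Next I would write down the local model of the transform. Choosing a local equation $t$ of $C$ and a local splitting $E|_C=R\oplus Q$, the sheaf $\widetilde E$ is generated by $R$ and $tQ$; comparing endomorphism sheaves shows that $(\widetilde E^\vee\otimes\widetilde E)_0$ and $(E^\vee\otimes E)_0$ contain a common subsheaf $K_0$ (traceless endomorphisms preserving the flag and vanishing in the $\Hom(R,Q)$ direction along $C$), fitting into two exact sequences
\begin{gather*}
0\to K_0\to (E^\vee\otimes E)_0\to \Hom(R,Q)|_C\to 0,\\
0\to K_0\to (\widetilde E^\vee\otimes\widetilde E)_0\to \Hom(Q,R)|_C\otimes\mathcal O_C(C)\to 0,
\end{gather*}
in which the transform twists the two off-diagonal directions oppositely by $C$. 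Tensoring the second sequence by $K$ and using the adjunction $K|_C\otimes\mathcal O_C(C)=K_C$, the rightmost term becomes $H^0(C,R\otimes Q^\vee\otimes K_C)$; tensoring the first by $K$ identifies $H^0(K_0\otimes K)$ with the kernel of the restriction-to-$C$ map
\[
\rho\colon V:=H^0\big(X',(E'^\vee\otimes E')_0\otimes K\big)\to H^0\big(C,\Hom(R,Q)\otimes K|_C\big),
\]
which sends a traceless Higgs field $\phi\colon E\to E\otimes K$ to its $\Hom(R,Q)$-component along $C$, i.e.\ to the obstruction to $\phi$ preserving the subbundle $R$. Here $V$ is the fixed, finite-dimensional obstruction space of $E$.

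With these sequences in hand I would fix $C$ to be a general, sufficiently ample smooth orbifold curve in $X^{\mathrm{sm}}$ — a general member of a very ample system on $S$ avoiding the finitely many points $p_i$, pulled back to $X$ — and then take $Q$ to be a general line-bundle quotient of $E|_C$ of large degree, with $R=\ker(E|_C\to Q)$. This accomplishes two vanishings. First, for $\deg Q\gg0$ the line bundle $R\otimes Q^\vee\otimes K_C$ has negative degree, so $H^0(C,R\otimes Q^\vee\otimes K_C)=0$ and hence $H^0\big((\widetilde E^\vee\otimes\widetilde E)_0\otimes K\big)=H^0(K_0\otimes K)=\ker\rho$. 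Second, I would show $\ker\rho=0$, i.e.\ that no nonzero global traceless Higgs field on $E$ preserves the chosen $R$ along $C$; this makes the whole $H^0$ vanish, proving the proposition.

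The hard part will be this last step, the general-position statement $\ker\rho=0$ for the whole space $V$ at once. I would argue by a dimension count on the incidence variety $\{(\phi,R):0\ne\phi\in\PP(V),\ \rho(\phi)=0\}$ inside the product of $\PP(V)$ with the space of line subbundles of $E|_C$ of degree $\deg R$. For $C$ general and ample the restriction $V\to H^0(C,(E^\vee\otimes E)_0\otimes K|_C)$ is injective, so each nonzero $\phi$ restricts to a nonzero traceless field on $E|_C$, whose invariant sub-line-bundles inject into the eigensheaves on the spectral double cover of $C$ and therefore form a family of dimension bounded independently of $\deg R$. Meanwhile, as $\deg Q\to\infty$ (so $\deg R\to-\infty$) the dimension of the full family of line subbundles of that degree grows without bound. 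Hence the fibers of the projection to $\PP(V)$ have codimension tending to infinity, the projection to the space of subbundles fails to be dominant once this codimension exceeds $\dim\PP(V)$, and a general $R$ is preserved by no nonzero $\phi$. Keeping $C$ fixed while letting $\deg Q\to\infty$ makes both numerical conditions — the negative degree above and this codimension bound — hold simultaneously, completing the argument; the orbifold structure of $C$ along $D$ only introduces rational degrees, which affects none of these vanishings.
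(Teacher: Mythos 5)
Your reduction coincides with the paper's: the same pair of exact sequences relating $(E'^\vee\otimes E')_0$ and $(\widetilde E'^\vee\otimes\widetilde E')_0$ through their common subsheaf of $\widetilde E'^\vee\otimes E'$ (your $K_0$ is the paper's $F'_0$; better not to reuse the letter $K$), and the same two target vanishings: $H^0(C,Q^\vee(C)\otimes R\otimes K)=0$ once $\deg R\ll 0$, and injectivity of the map from $V=H^0\big(X',(E'^\vee\otimes E')_0\otimes K\big)$ to the sections of $R^\vee\otimes Q\otimes K$ on $C$. For the second vanishing you diverge: the paper fixes finitely many points $x_i\in U$ and non-eigendirections $r_i\in\PP(E_{x_i})$ so that evaluation at the pairs $(x_i,r_i)$ is already injective on the finite-dimensional space $V$, after which any $C$ through the $x_i$ and any sequence \eqref{eqn.REQ} with $[R_{x_i}]=r_i$ works; you instead take $C$ sufficiently ample so that $V$ restricts injectively to $C$ and run a dimension count on the incidence variety of pairs $(\phi,R)$ with $R$ invariant under $\phi|_C$. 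That count is correct (a nonzero traceless $\phi|_C$ has at most two invariant saturated line subbundles, so the incidence variety has dimension at most $\dim\PP(V)$, while the space of degree-$d$ quotients grows with $d$), and it is a legitimate alternative, at the price of requiring ampleness of $C$ and some Quot-scheme bookkeeping that the paper's pointwise argument avoids.

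The genuine gap is at the start of your construction: ``take $Q$ to be a general line-bundle quotient of $E|_C$ of large degree'' presupposes that $E|_C$ admits \emph{any} rank-one locally free quotient on the $\mu_2$-gerbe $G\times_XC$. This is not automatic and is in fact equivalent to the vanishing of the restricted Brauer class: such a quotient $Q$ would be a line bundle on $G\times_XC$ with nontrivial $\mu_2$-weight, and a sheaf of this kind exists only if the class of the gerbe dies in $H^2(C,\G_m)$, i.e.\ only if $\beta|_C=0$. Since $\beta$ is nontrivial on $X$ (that is the whole point of the construction), $E$ has no rank-one quotients over the generic point of $X$, so the existence of \eqref{eqn.REQ} on $C$ is a real assertion, not a generic-choice statement. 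The paper supplies it via Tsen's theorem (the Brauer group of the function field of the curve vanishes, hence $\beta|_C=0$) together with Proposition \ref{prop.azumayatrivial}, which produces a weight-one line bundle $T$ on $G\times_XC$; then $E|_C\otimes T^\vee$ descends to a bundle $N$ on $C$, and sequences \eqref{eqn.REQ} correspond to sections of $\PP(N)\to C$, which exist with $\deg R$ arbitrarily negative and with prescribed values at prescribed points. You need to insert this step before the phrase ``general line-bundle quotient'' has any content; with it, the rest of your argument goes through.
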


\begin{proof}
We may work
on $X'$ with
\[ \widetilde{E}':=\sigma^*\widetilde{E} \]
and show that
\[ H^0(X',(\widetilde{E}'^\vee\otimes \widetilde{E}')_0\otimes K)=0. \]
The argument follows closely that of \cite{dejong}
(written in the language of sheaves of Azumaya algebras rather than
vector bundles on gerbes).

There is the exact sequence
\[
0\to E'^\vee\to \widetilde{E}'^\vee\to Q^\vee(C) \to 0
\]
obtained by identifying
$\mathrm{Ext}^1(Q,\mathcal{O}_{G'})$ with $Q^\vee(C)$.
There are standard exact sequences
involving the intersection $F'$ of
\[ E'^\vee\otimes E'\qquad
\text{and}\qquad
\widetilde{E}'^\vee\otimes \widetilde{E}' \]
in $\widetilde{E}'^\vee\otimes E'$:
\begin{gather*}
0\to F'\to E'^\vee\otimes E'\to R^\vee\otimes Q\to 0,\\
0\to F'\to \widetilde{E}'^\vee\otimes \widetilde{E}'\to
Q^\vee(C)\otimes R\to 0,
\end{gather*}
giving rise to similar exact sequences involving the kernels
of the trace map
\[ F'_0,\qquad (E'^\vee\otimes E')_0,\qquad (\widetilde{E}'^\vee\otimes \widetilde{E}')_0. \]
Now it suffices to show:
\begin{itemize}
\item[(i)] There exist a finite set of points $\{x_1,\dots,x_n\}$ of
$U$ and points
\[ r_1\in \PP(E_{x_1}),\qquad\dots,\qquad r_n\in \PP(E_{x_n}) \]
such that for $C$ passing through $x_1$, $\dots$, $x_n$ and exact sequence
\eqref{eqn.REQ} such that $[R_{x_i}]=r_i$ for all $i$, the map on
global sections of
\[ (E'^\vee\otimes E')_0\otimes K\to R^\vee\otimes Q\otimes K \]
is injective.
\item[(ii)] With points $x_1$, $\dots$, $x_n$ and $r_1$, $\dots$, $r_n$
as in (i) and any choice of $C$ passing through $x_1$, $\dots$, $x_n$,
there exists an exact sequence \eqref{eqn.REQ} with
$[R_{x_i}]=r_i$ for all $i$ and $R$ of arbitrarily negative degree.
\end{itemize}

Notice, then, $H^0(X',F'_0\otimes K)=0$ and
$H^0(C,Q^\vee(C)\otimes R\otimes K)=0$, and we conclude that
$H^0(X',(\widetilde{E}'^\vee\otimes \widetilde{E}')_0\otimes K)$ vanishes,
as desired.

For (i), we may fix a trivialization of $K$ on some nonempty open
$U_1\subset U$.
For nonzero $s\in H^0(X',(E'^\vee\otimes E')_0\otimes K)$ and
$x\in U_1$ such that $s$ is nonzero in the fiber over $x$,
we identify $s$ (using the trivialization of $K$) with a trace zero
endomorphism of $E'_x$.
For any one-dimensional subspace $R_x\subset E'_x$ which is not an eigenspace,
$s$ maps to a nonzero element of $\Hom(R_x,E'_x/R_x)$.
Since $H^0(X',(E'^\vee\otimes E')_0\otimes K)$ is finite dimensional,
some finite collection of points and subspaces will satisfy (i).

For (ii) we apply Tsen's theorem, which gives the vanishing of the
restriction of $\beta$ to $C$.
So by Proposition \ref{prop.azumayatrivial} there is a line bundle $T$
on $G\times_XC$ such that the generic stabilizer action is nontrivial.
Then $E\otimes T^\vee$ descends to a vector bundle
\[ N \to C,\]
and we obtain a
sequence \eqref{eqn.REQ} from a similar sequence involving
the vector bundle $N$ by tensoring with $T$.
Notice, to give such a sequence is the same as to give a section of
\begin{equation}
\label{eqn.PNC}
\PP(N)\to C,
\end{equation}
which, being a representable projective morphism, is the same as to give a
section of the restriction of \eqref{eqn.PNC} to a nonempty open substack of $C$.
We choose $V\subset C$ that contains all the points $x_1$, $\dots$, $x_n$,
avoids all the orbifold points, and has the property that $N|_V$ is trivial.
A choice of trivialization identifies sections of \eqref{eqn.PNC} with
maps $C\to \PP^1$, which may be chosen to map $x_i$ to the point corresponding
to $r_i$ for every $i$ and to have arbitrarily high degree.
\end{proof}

\section{Proof of the main theorem}
Let the notation be as in Theorem \ref{thm.main}.
By hypothesis,
$\mathcal{M}$ contains a point
corresponding to an \'etale degree $2$ cover,
nontrivial over every irreducible component of a reducible
nodal curve with smooth irreducible compoments.
We take, initially, this as the pointed variety $(B,b_0)$.
For simplicity of notation we write $D=D_1\cup D_2$; the case
of more than $2$ irreducible components creates no additional difficulty.

The covering of $D$
determines a nontrivial $2$-torsion
Brauer group element $\alpha\in \Br(S\smallsetminus D)$.
By Corollary \ref{cor.azumayaX}, the class $\alpha$ pulls back and extends to
the class $\beta\in \Br(X)$ of a sheaf of
Azumaya algebras $\mathcal{A}$ of index $2$ on the root stack $X=\sqrt{(S,D)}$.
Now we fix a $\mu_2$-gerbe $G\to X$ and locally free coherent sheaf $E$ on $G$
as in Proposition \ref{prop.azumayagerbe} and let $\gamma\in H^2(X,\mu_2)$
be the class of $G$.

By Proposition \ref{prop.elementarytransform}, there is an
elementary transform $\widetilde{E}$ of $E$ with
vanishing obstruction for the deformation theory of
locally free coherent sheaf with given determinant.

We let $\mathcal{D}\subset B\times S$ be the divisor corresponding to the
linear system underlying $B$ and let
$\mathcal{X}$ denote the root stack $\sqrt{(B\times S,\mathcal{D})}$.
By the proper base change theorem for tame Deligne-Mumford stacks
\cite[App.\ A]{ACV} applied to $\mathcal{X}\to B$, we may
replace $B$ by an \'etale neighborhood of $b_0$ in such a way that
the class $\gamma$ is the restriction of a class
\[ \Gamma\in H^2(\mathcal{X},\mu_2). \]
We denote an associated $\mu_2$-gerbe by
\[ \mathcal{G}\to \mathcal{X}. \]

\begin{lemm}
\label{lem.detEtilde}
The restriction map $\Pic(\mathcal{X})\to \Pic(X)$ is surjective.
\end{lemm}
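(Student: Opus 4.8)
The plan is to compare the two Picard groups by means of the standard description of the Picard group of a square root stack. For an effective Cartier divisor $E$ on a regular scheme $Y$, the root stack $\sqrt{(Y,E)}$ carries a tautological line bundle $\mathcal{N}$ whose square is the pullback of $\mathcal{O}_Y(E)$, and one has a short exact sequence
\[
0\to \Pic(Y)\xrightarrow{\pi^*}\Pic\big(\sqrt{(Y,E)}\,\big)\to \Z/2\Z\to 0,
\]
with the quotient generated by the class of $\mathcal{N}$ (cf.\ \cite{cadman}). First I would write this sequence down for $Y=B\times S$, $E=\mathcal{D}$ (yielding $\mathcal{X}$) and for $Y=S$, $E=D$ (yielding $X$), obtaining two short exact sequences whose right-hand terms are both $\Z/2\Z$.

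Next I would record that the two sequences are compatible with the inclusion $X\hookrightarrow\mathcal{X}$. Since divisorial root stacks commute with base change and, by hypothesis, $\mathcal{D}$ restricts to $D$ on $\{b_0\}\times S\cong S$, there is a canonical identification $X\cong \mathcal{X}\times_{B\times S}(\{b_0\}\times S)$ under which the tautological root of $\mathcal{O}_{B\times S}(\mathcal{D})$ restricts to the tautological root of $\mathcal{O}_S(D)$; hence the induced map on the $\Z/2\Z$ quotients is the identity. On the sub-objects, the restriction map $\Pic(B\times S)\to\Pic(S)$ along $\{b_0\}\times S$ is surjective: it is split by the pullback $p_2^*$ along the second projection $p_2\colon B\times S\to S$, since $p_2^*\mathcal{L}$ restricts to $\mathcal{L}$ on every fiber $\{b\}\times S$.

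Finally I would run a short diagram chase on the resulting commutative ladder of short exact sequences, whose left vertical map is surjective and whose right vertical map is the identity. Given $\ell\in\Pic(X)$, surjectivity of $\Pic(\mathcal{X})\to\Z/2\Z$ lets me choose $m\in\Pic(\mathcal{X})$ whose restriction has the same image as $\ell$ in $\Z/2\Z$; then $\ell$ minus the restriction of $m$ lies in the image of $\Pic(S)$, which in turn lifts to $\Pic(B\times S)$ by surjectivity of the left vertical map, and the restriction of this lift plus the restriction of $m$ equals $\ell$. Concretely this is the four lemma, using that both the left and right vertical maps are surjective.

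The only step requiring genuine care is the compatibility asserted in the second paragraph: one must check that the formation of $\sqrt{(\cdot\,,\cdot)}$ commutes with the base change $\{b_0\}\times S\hookrightarrow B\times S$ — a standard property of divisorial root stacks, once one verifies that $\mathcal{D}$ pulls back to the Cartier divisor $D$, which is guaranteed by the hypothesis $\mathcal{D}\cap(\{b_0\}\times S)=D$ together with smoothness of $\mathcal{D}$ — and that under this identification the tautological root bundle restricts to the tautological root bundle, so that the right-hand vertical map is genuinely the identity rather than merely an isomorphism. The surjectivity on the Picard groups of the underlying schemes, by contrast, is immediate from the existence of the projection $p_2$.
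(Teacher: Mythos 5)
Your proof is correct and follows essentially the same route as the paper: both arguments rest on Cadman's description of the Picard group of a root stack (every line bundle on $X$ is, up to a twist by the tautological root of $\mathcal{O}(D)$, pulled back from $S$), combined with the surjectivity of $\Pic(B\times S)\to\Pic(S)$ and the compatibility of the tautological roots under the base change $\{b_0\}\times S\hookrightarrow B\times S$. The paper compresses this into two sentences; your version merely makes the commutative ladder and the diagram chase explicit.
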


\begin{proof}
By the description of the Picard group of a root stack
\cite[\S 3.1]{cadman}, for a given line bundle $L$ on $X$ either
$L$ or its twist by the divisor of the gerbe of root stack is
pulled back from $X$.
Since every line bundle on $X=\{b_0\}\times X$ is the restriction of a
line bundle on $B\times X$, we have the result.
\end{proof}

We apply Lemma \ref{lem.detEtilde} to the line bundle on $X$ whose
pullback to $G$ is equal to $\det(\widetilde{E})$ to obtain a line
bundle on $\mathcal{X}$ whose pullback to $\mathcal{G}$ restricts to
$\det(\widetilde{E})$ on $G$.
Now the deformation theory of locally free coherent sheaves with
given determinant (which in our setting makes use of the
Grothendieck existence theorem for tame Deligne-Mumford stacks
of \cite[App.\ A]{AV}), gives, after
replacing $B$ by an \'etale neighborhood of $b_0$, the existence of
a locally free coherent sheaf $\widetilde{\mathcal{E}}$ on $\mathcal{G}$
extending $\widetilde{E}$.

Proposition \ref{prop.conicbundle}, applied to $\widetilde{\mathcal{E}}$,
yields a standard conic bundle $\mathcal{C}\to B\times S$
whose restriction over $\{b_0\}\times S$ has
an ordinary double point singularity over every point of
$D_1\cap D_2$.
We conclude by applying Corollary \ref{cor.voisin} to the family
$\mathcal{C}\to B$.

\end{document}